\DeclareFontFamily{OML}{rsfs}{\skewchar\font'177}
\DeclareFontShape{OML}{rsfs}{m}{n}{ <5> <6> rsfs5 <7> <8> <9> rsfs7
  <10> <10.95> <12> <14.4> <17.28> <20.74> <24.88> rsfs10 }{}
\DeclareMathAlphabet{\mathfs}{OML}{rsfs}{m}{n}
\newcommand{\x}{\ensuremath{\underline{x}}}
\newtheorem{thm}{Theorem}[section]
\newtheorem{lem}[thm]{Lemma}
\newtheorem{cor}[thm]{Corollary}
\newtheorem{defi}[thm]{Definition}
\newtheorem{theorem}{Theorem}[section]
\newtheorem*{theorem*}{Theorem}
\newtheorem*{example*}{Example}
\newtheorem{definition}{Definition}[section]
\numberwithin{equation}{section}
\renewcommand{\mod}{\mbox{$\,\mathrm{mod}\,$}}
\def\var{\textrm{var}}
\renewcommand{\epsilon}{\varepsilon}
\def\text#1{\textrm{#1}}
\def\emptyset{\varnothing}
\def\e{\epsilon}
\def\vf{\varphi}
\def\a{\alpha}
\def\b{\beta}
\def\d{\delta}
\def\g{\gamma}
\def\l{\lambda}
\def\s{\sigma}
\def\x{\times}
\def \R{\mathbb R}
\def \N{{\mathbb N}}
\def \Z{\mathbb Z}
\def \T{\mathbb T}
\def\un{\underline}
\def\wh{\widehat}
\def\({\biggl(}
\def\){\biggr)}
\def\<{\bold\langle}
\def\>{\bold\rangle}
\title[Bernoulli equilibrium states for surface diffeomorphisms]{Bernoulli equilibrium states for surface diffeomorphisms}
\author{Omri M. Sarig}\thanks{This work was supported by  ERC award  ERC-2009-StG n$^\circ$ 239885.}
\date{April 23, 2011}
\keywords{Equilibrium measures, Countable Markov partitions, Bernoulli}
\subjclass[2010]{37D25 (primary), 37D35 (secondary)}
\address{Faculty of Mathematics and Computer Science\\ The Weizmann Institute of Science\\ POB 26, Rehovot, Israel}
\email{omri.sarig@weizmann.ac.il}
\begin{document}
\maketitle
\begin{abstract}
Suppose $f:M\to M$ is a $C^{1+\a}$ $(\alpha>0)$ diffeomorphism on a compact smooth orientable manifold $M$ of dimension two, and let $\mu_\Psi$ be an equilibrium measure for a H\"older continuous potential $\Psi:M\to\R$. We show that if $\mu_\Psi$ has positive metric entropy, then $f$ is measure theoretically isomorphic mod $\mu_\Psi$ to the product of a Bernoulli scheme and a finite rotation.
\end{abstract}
\section{Statements}
Suppose $f:M\to M$ is a $C^{1+\a}$ ($\a>0$) diffeomorphism on a compact smooth orientable manifold $M$ of dimension two. Suppose $\Psi:M\to\R$ is H\"older continuous. An invariant probability measure $\mu$ is called an {\em equilibrium measure}, if it maximizes the quantity $h_\mu(f)+\int\Psi d\mu$, where $h_\mu(f)$ is the metric entropy. Such measures always exist when $f$ is $C^\infty$, because in this case the function $\mu\mapsto h_\mu(f)$ is upper semi--continuous \cite{N}. Let $\mu_\Psi$ be an ergodic equilibrium measure of $\Psi$. We prove:


\begin{thm}\label{ThmMain}
If $h_{\mu_\Psi}(f)>0$, then $f$ is   measure theoretically isomorphic with respect to $\mu_\Psi$ to the product of a Bernoulli scheme (see \S\ref{BernoulliSection}) and a finite rotation (a map of the form $x\mapsto (x+1)\mod p$ on $\{0,1,\ldots,p-1\}$).
\end{thm}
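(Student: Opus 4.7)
The plan is to translate the problem to symbolic dynamics via a countable-state Markov partition and then apply Ornstein theory. Since $h_{\mu_\Psi}(f)>0$, Ruelle's inequality together with Pesin theory on a surface imply that $\mu_\Psi$ is hyperbolic (both Lyapunov exponents are nonzero and of opposite sign). I would invoke the recently available construction of countable Markov partitions for $C^{1+\a}$ surface diffeomorphisms on the set of hyperbolic points to obtain a topologically transitive countable-state Markov shift $(\Sigma,\sigma)$ and a H\"older continuous, finite-to-one factor map $\pi:\Sigma\to M$ whose image contains a set of full $\mu_\Psi$-measure. The first step is to lift $\mu_\Psi$ to a $\sigma$-invariant probability measure $\hat\mu$ on $\Sigma$ with $\pi_*\hat\mu=\mu_\Psi$ and $h_{\hat\mu}(\sigma)=h_{\mu_\Psi}(f)$, which exists because $\pi$ is finite-to-one and entropy-preserving on hyperbolic measures.

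Next I would observe that $\hat\mu$ is itself an ergodic equilibrium state of the locally H\"older potential $\hat\Psi=\Psi\circ\pi$ on the irreducible component of $\Sigma$ that it charges. Invoking the thermodynamic formalism for countable Markov shifts, such an equilibrium state is a Gibbs measure relative to cylinders and its potential is positive recurrent in the sense of Sarig. An irreducible countable Markov shift admits a period $p\in\N$, and the shift $\sigma^p$ acts topologically mixingly on each of the $p$ cyclic classes that partition the irreducible component. On a single mixing class, the restriction of $(\sigma^p,\hat\mu)$ is an ergodic Gibbs equilibrium state of a H\"older potential on a topologically mixing countable Markov shift; the classical Bowen--Ratner argument, adapted to the countable-state setting, then verifies Ornstein's very weak Bernoulli criterion from the Gibbs property and exponential decay of correlations on cylinders, yielding the Bernoulli property. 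Reassembling the $p$ cyclic classes gives that $(\sigma,\hat\mu)$ is measure theoretically isomorphic to a Bernoulli scheme times the rotation by $1$ on $\Z/p\Z$.

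The main obstacle will be transferring this product structure back through the factor map $\pi$ to $(f,\mu_\Psi)$. Because $\pi$ is only finite-to-one and not injective, Bernoullicity does not descend automatically: general factors of Bernoulli systems need not be Bernoulli, so a direct pushforward argument fails. The resolution I would pursue is to promote $\pi$ to an essentially bijective coding, using the bounded-multiplicity information built into the construction of the Markov partition to trim the overlaps on a set of full $\hat\mu$-measure; alternatively one can appeal to an Ornstein-type theorem asserting that bounded-to-one factors of Bernoulli shifts are again Bernoulli, provided one can realize the factor as a countable-to-one map whose atoms have a measurable cross-section. Verifying that such a refinement of $\pi$ is available in the present symbolic extension, and that it respects the cyclic class of period $p$ so that the finite rotation factor is preserved, is the delicate technical point. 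Once this is established, the product structure on $(\sigma,\hat\mu)$ passes to $(f,\mu_\Psi)$, yielding Theorem~\ref{ThmMain}.
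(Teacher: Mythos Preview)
Your overall architecture---lift $\mu_\Psi$ to an ergodic $\hat\mu$ on a countable Markov shift, identify $\hat\mu$ as an equilibrium state of $\Psi\circ\pi$, take the spectral decomposition into $p$ cyclic classes, and prove Bernoullicity of $\sigma^p$ on each mixing class via a Gibbs/weak-Bernoulli argument---matches the paper exactly. The gap is in your final step.

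Your assertion that ``general factors of Bernoulli systems need not be Bernoulli'' is simply false: Ornstein's theorem states precisely that every measure-theoretic factor of a Bernoulli automorphism is again Bernoulli. This is exactly what the paper invokes, and it dissolves what you call ``the main obstacle.'' There is no need to trim $\pi$ to a bijection or to appeal to any special bounded-to-one factor theorem; the finite-to-one property of $\pi$ is used only to guarantee that entropy is preserved under the lift (so that $\hat\mu$ is an equilibrium state upstairs), not to transfer Bernoullicity downward.

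There is also a subtlety you miss in how the rotation factor descends. One does not push the product (Bernoulli $\times$ rotation by $p$) through $\pi$ as a whole. Instead set $X_i:=\pi(\Sigma_i')$; then $f^p:X_i\to X_i$ is a factor of the Bernoulli system $(\Sigma_i',\sigma^p,\hat\mu_i)$, hence Bernoulli by Ornstein. Because $\pi$ is not injective the $X_i$ need not be pairwise disjoint: ergodicity of $f^p|_{X_i}$ forces either $X_i=X_j$ or $X_i\cap X_j=\varnothing$ mod $\mu$, so the period on the base is some $q\mid p$, possibly strictly smaller than $p$. Then $f^q:X_0\to X_0$ is a root of the Bernoulli automorphism $f^p|_{X_0}$, and roots of Bernoulli automorphisms are Bernoulli (another theorem of Ornstein). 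The Adler--Shields--Smorodinsky lemma then reassembles $X_0,\ldots,X_{q-1}$ into the desired Bernoulli $\times$ finite-rotation structure on $(M,\mu_\Psi)$.
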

\noindent
%
The proof applies to other potentials, such as $-t\log J_u$ ($t\in\R$) where $J_u$ is the unstable Jacobian, see \S\ref{SectionJ_u}. In the particular case of the measure of maximal entropy ($\Psi\equiv 0$) we can say more, see \S\ref{MaxSubSection}.

The theorem  is false in higher dimension: Let $f$ denote the product of a hyperbolic toral automorphism and an irrational rotation. This $C^\infty$ diffeomorphism has many equilibrium measures of positive entropy. But $f$  cannot satisfy the conclusion of the theorem with respect to any of these measures, because $f$ has  the irrational rotation as a factor, and therefore none of its powers can have ergodic components with the $K$ property.

Bowen \cite{Bowen} and Ratner \cite{Ratner} proved Theorem \ref{ThmMain} for uniformly hyperbolic diffeomorphisms.
 In the non-uniformly hyperbolic case, Pesin proved that any absolutely continuous ergodic invariant measure all of whose  Lyapunov exponents are non-zero is isomorphic to the product of a Bernoulli scheme and a finite rotation \cite{Pesin}. By Pesin's Entropy Formula and Ruelle's Entropy Inequality, these measures are equilibrium measures of $-\log J_u$. Ledrappier extended Pesin's result to all equilibrium measures  with non-zero exponents for the potential $-\log J_u$, including those which are not absolutely continuous \cite{Ledrappier}.   These results hold in any dimension.

The work of Pesin and Ledrappier (see also \cite{OW}) uses the following property of equilibrium measures of  $-\log J_u$: the conditional measures on unstable manifolds are absolutely continuous \cite{Ledrappier}.    This is false for general H\"older potentials  \cite{LY}.

Theorem \ref{ThmMain} is proved in three steps:
\begin{enumerate}
\item {\bf Symbolic dynamics:}
 Any ergodic equilibrium measure on $M$ with positive entropy is a finite-to-one H\"older factor of an ergodic equilibrium measure on a countable Markov shift (CMS).
\item {\bf Ornstein Theory:} Factors of equilibrium measures of H\"older potentials on topologically mixing CMS are Bernoulli.
\item {\bf Spectral Decomposition:} The non-mixing case.
\end{enumerate}

\subsection*{Notation} $a=M^{\pm 1}b$ means $M^{-1}b\leq a\leq Mb$.

\section{Step One: Symbolic dynamics}\label{SymbolicSection}
Let $\mathfs G$ be a directed graph with a countable collection of vertices $\mathfs V$ s.t. every vertex has at least one  edge coming in, and at least one edge coming out. The  {\em countable Markov shift} (CMS) associated to $\mathfs G$ is the set
$$
\Sigma=\Sigma(\mathfs G):=\{(v_i)_{i\in\Z}\in\mathfs V^{\mathbb Z}:v_i\to v_{i+1}\textrm{ for all }i\}.
$$
The {\em natural metric} $d(\un{u},\un{v}):=\exp[-\min\{|i|:u_i\neq v_i\}]$ turns $\Sigma$ into a complete separable metric space. $\Sigma$ is compact iff $\mathfs G$ is finite. $\Sigma$  is locally compact iff every vertex of $\mathfs G$ has finite degree.
The {\em cylinder sets}
\begin{equation}\label{Cyl1}
_m[a_m,\ldots,a_n]:=\{(v_i)_{i\in\Z}\in\Sigma:v_i=a_i\ (i=m,\ldots,n)\}
\end{equation}
form a basis for the topology, and they generate the Borel $\s$--algebra $\mathfs B(\Sigma)$.

The {\em left shift map} $\s:\Sigma\to \Sigma$ is defined by $\s[(v_i)_{i\in\Z}]=(v_{i+1})_{i\in\Z}$.  Given $a,b\in\mathfs V$, write  $a\xrightarrow[]{n}b$ when there is a path $a\to v_1\to\cdots\to v_{n-1}\to b$ in $\mathfs G$. The left shift is topologically transitive iff  $\forall a,b\in\mathfs V\ \exists n\,(a\xrightarrow[]{n}b)$.  In this case $\gcd\{n:a\xrightarrow[]{n}a\}$ is the same for all $a\in\mathfs V$, and is called the {\em period} of $\s$. The left shift is topologically mixing iff it is topologically transitive and its period is equal to one. See \cite{Ki}.

Let
$
\Sigma^\#:=\{(v_i)_{i\in\Z}\in\Sigma:\exists u,v\in\mathfs V\exists n_k,m_k\uparrow\infty\textrm{ s.t. } v_{-m_k}=u, v_{n_k}=v\}.
$
Every $\s$--invariant probability measure gives $\Sigma^\#$ full measure, because of Poincar\'e's Recurrence Theorem.

Suppose $f:M\to M$ is a $C^{1+\a}$--diffeomorphism of a compact orientable smooth manifold $M$ s.t. $\dim M=2$. If $h_{top}(f)=0$, then every $f$--invariant measure has zero entropy by the variational principle, and Theorem \ref{ThmMain} holds trivially.  So we assume without loss of generality that $h_{top}(f)>0$. Fix $0<\chi<h_{top}(f)$.

A set $\Omega\subset M$ is called {\em $\chi$--large}, if $\mu(\Omega)=1$ for every ergodic invariant probability measure $\mu$ whose entropy is greater than $\chi$. The following theorems are  in \cite{S}:
\begin{thm}\label{Theorem_Main_Extension}
There exists a locally compact countable Markov shift $\Sigma_\chi$ and a H\"older continuous map $\pi_\chi:\Sigma_\chi\to M$ s.t.  $\pi_\chi\circ \s=f\circ \pi_\chi$,
 $\pi_\chi[\Sigma_\chi^\#]$ is $\chi$--large, and s.t.
every point in $\pi_\chi[\Sigma_\chi^\#]$ has finitely many pre-images.
\end{thm}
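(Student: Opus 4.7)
The plan is to construct $\Sigma_\chi$ by Markov-coding non-uniformly hyperbolic orbits in the spirit of Katok and Bowen, adapted to the infinite-alphabet setting forced by Pesin theory. Since $\dim M=2$ and $\chi>0$, Ruelle's inequality applied to both $f$ and $f^{-1}$ implies that every ergodic measure $\mu$ with $h_\mu(f)>\chi$ is hyperbolic, with one Lyapunov exponent larger than $\chi$ and the other smaller than $-\chi$. Fixing $\epsilon\ll\chi$, I would work with the Pesin regular set $\NUH_\chi^\ast$ and the associated Pesin charts $\Psi_x:[-Q(x),Q(x)]^2\to M$, in which $f$ reads as a small $C^{1+\alpha/2}$ perturbation of a hyperbolic diagonal matrix and admissible stable/unstable manifolds are Lipschitz graphs of uniformly controlled $C^{1+\alpha/2}$ functions.

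The vertex set $\mathfs V$ of $\Sigma_\chi$ would be a countable set of \emph{$\epsilon$-double Pesin charts}: finite-precision approximations of the data $(x,Q^s(x),Q^u(x),C(x))$ where $C(x)$ is the Lyapunov change-of-basis matrix, quantized on the discrete scale $\{e^{-k/3}:k\in\N\}$. An edge $v\to v'$ is drawn exactly when $f$ sends a definite fraction of $\Psi_v$ hyperbolically into $\Psi_{v'}$ and the scale parameters evolve tempered. Only finitely many successors are compatible with each vertex, so $\Sigma_\chi$ is locally compact. The projection $\pi_\chi$ would then be defined by shadowing: admissibility of the chart sequence lets a graph-transform contraction pick out a unique point whose orbit respects every chart. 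Commutation with $\sigma$ is automatic, and H\"older continuity of $\pi_\chi$ follows from exponential contraction along stable/unstable manifolds, which forces orbits agreeing on the central block $[-N,N]$ of their codings to be exponentially close in $N$.

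For $\chi$-largeness, I would use that any ergodic $\mu$ with $h_\mu(f)>\chi$ gives full measure to some Pesin block $\{Q\geq\delta_0\}$, so almost every orbit visits the block infinitely often in both time directions; a Lusin-type approximation together with a diagonal/compactness argument then produces a coding $\un v\in\Sigma_\chi$ whose base vertex $v_\ast$ is revisited along subsequences $n_k,m_k\uparrow\infty$, placing $\un v$ inside $\Sigma_\chi^\#$. The finite-to-one property on $\pi_\chi[\Sigma_\chi^\#]$ is the crux of the theorem and, I expect, the main obstacle: two distinct codings $\un v,\un v'$ projecting to the same point must have overlapping charts at every time, and bounding the number of such codings requires showing that along each recurrence to $v_\ast$ only boundedly many double Pesin charts can sit over a given base point with matching pair of invariant manifolds. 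The quantization scale, the tempered evolution of $Q$, and the rigidity of $C(x)$ under the shadowing fixed-point equation all have to be tuned so that this multiplicity does not grow along the orbit — this is the delicate combinatorial part that distinguishes the construction from the classical finite-alphabet case.
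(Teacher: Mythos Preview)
The paper does not prove this statement at all: Theorems~\ref{Theorem_Main_Extension}, \ref{Theorem_Main_Finite_To_One}, and \ref{Theorem_Main_Lift} are quoted verbatim from \cite{S} (see the sentence ``The following theorems are in \cite{S}'' immediately preceding them) and are used as black boxes throughout. There is therefore no proof in the paper to compare your proposal against.

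That said, your sketch is a faithful outline of the construction carried out in \cite{S}. The vertices there are indeed countable discretizations of double Pesin charts (the ``$\epsilon$--double charts'' $\Psi_x^{p^u,p^s}$), edges encode an overlap condition that forces the graph to be locally finite, $\pi_\chi$ is defined by a graph-transform/shadowing argument, and the H\"older continuity and $\chi$-largeness follow along the lines you describe. You are also right that the finite-to-one statement is the heart of the matter; in \cite{S} it is handled not quite by bounding how many charts sit over a single point, but by a more elaborate argument showing that any two codings of the same orbit are forced to use ``affiliated'' charts, and that affiliation classes have bounded cardinality depending only on the recurrent symbols $u,v$ --- this is what produces the function $\vf_\chi(u,v)$ of Theorem~\ref{Theorem_Main_Finite_To_One}. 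Your last paragraph anticipates the difficulty correctly but underestimates the machinery needed to resolve it.
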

\begin{thm}\label{Theorem_Main_Finite_To_One}
Denote the set of states of $\Sigma_\chi$ by $\mathfs V_\chi$. There exists a function $\vf_\chi:\mathfs V_\chi\x \mathfs V_\chi\to\N$ s.t. if $x=\pi_\chi[(v_i)_{i\in\Z}]$ and $v_i=u$ for infinitely many negative $i$, and $v_i=v$ for infinitely many positive $i$, then $|\pi_\chi^{-1}(x)|\leq \vf_\chi(u,v)$.
\end{thm}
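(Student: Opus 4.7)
The plan is to bound $|\pi_\chi^{-1}(x)|$ by controlling, at the recurrence times $-m_k, n_k$ along $\un{v}$, the finitely many states that can appear in any competing symbolic preimage of $x$. Each vertex $v\in\mathfs V_\chi$ is encoded in \cite{S} as an $\e$-double Pesin chart with a discrete ``size parameter'' $q(v)$, and the construction there supplies the following \emph{inverse theorem}: for any $q_0>0$ and any $y\in M$, only finitely many states $v\in\mathfs V_\chi$ with $q(v)\ge q_0$ have a chart representing $y$, with a universal bound $N(q_0)$ independent of $y$.

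The first step is a control lemma: if $\un{w}\in\pi_\chi^{-1}(x)$ then $q(w_n)\ge c_0\, q(v_n)$ for some universal $c_0>0$. This should follow because $q$ is a coarse discretization of the intrinsic hyperbolicity parameter at $f^n(x)$, so two states whose charts both represent the same point must have comparable parameters. Evaluating at the recurrence times yields
\[
w_{-m_k}\in A,\qquad w_{n_k}\in B,\qquad |A|\le N(c_0 q(u)),\ |B|\le N(c_0 q(v)).
\]

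The second step converts this into a bound on the number of preimages. Given distinct preimages $\un{w}^{(1)},\ldots,\un{w}^{(J)}$, choose $k$ large enough that the $\un{w}^{(j)}$ already differ somewhere in $[-m_k,n_k]$. The key claim is then a \emph{uniqueness} statement: among admissible words in $\mathfs G_\chi$ of length $m_k+n_k+1$ with prescribed initial and terminal symbols in $A$ and $B$, at most one can $\pi_\chi$-track the orbit segment $(f^{-m_k}(x),\ldots,f^{n_k}(x))$. The heuristic is that admissible manifolds are determined by the chart sequence up to exponentially small error, so two competing finite paths through the same endpoints would yield two transverse admissible manifolds meeting at each intermediate $f^n(x)$, forcing them to coincide. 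Granted this, the map $\un{w}^{(j)}\mapsto (w^{(j)}_{-m_k}, w^{(j)}_{n_k})$ is injective, and
\[
J\ \le\ N(c_0 q(u))\cdot N(c_0 q(v))\ =:\ \vf_\chi(u,v).
\]

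The main obstacle is the uniqueness step. It is conceivable that the endpoint data alone is not quite enough and must be supplemented by a bounded finite context (e.g.\ also fixing a few neighbors $w_{-m_k\pm 1},w_{n_k\pm 1}$), which would only inflate the definition of $\vf_\chi(u,v)$ by a factor depending on $u$ and $v$. The control lemma and the inverse theorem, in contrast, are standard consequences of the discrete $\e$-double chart structure of $\Si_\chi$ built in \cite{S} and should be straightforward to extract.
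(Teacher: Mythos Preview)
The paper does not contain a proof of this theorem. It is one of three results (Theorems~\ref{Theorem_Main_Extension}--\ref{Theorem_Main_Lift}) that are merely quoted from \cite{S}; see the sentence ``The following theorems are in \cite{S}'' immediately preceding their statements. There is therefore no proof in the present paper against which to compare your sketch.

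For what it is worth, your outline is a reasonable caricature of the argument actually carried out in \cite{S}: the three ingredients you name --- a finiteness bound on the number of $\e$-double charts of size at least $q_0$ that can sit over a given point, comparability of the size parameters along any two codings of the same orbit, and an injectivity statement sending a preimage to its pair of recurrent endpoint states --- form the skeleton of Sarig's proof there. Your caution about the uniqueness step is apt: in \cite{S} this is where the real work lies (via the structure of admissible manifolds and a shadowing-type argument), but in the end no additional finite context beyond the endpoint states is required, so the bound takes exactly the product form $\vf_\chi(u,v)$ you wrote down. None of this, however, appears in the paper under review.
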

\begin{thm}\label{Theorem_Main_Lift}
 Every ergodic $f$--invariant probability measure $\mu$ on $M$ such that $h_\mu(f)>\chi$ equals
$\wh{\mu}\circ\pi_\chi^{-1}$ for some   ergodic $\s$--invariant probability measure $\wh{\mu}$ on $\Sigma_\chi$ with the same entropy.
\end{thm}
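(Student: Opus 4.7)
My plan is to produce $\hat\mu$ by averaging uniformly over each finite fiber of $\pi_\chi$, pass to an ergodic component, and then deduce entropy equality from the finiteness of fibers. For the lift, the hypothesis $h_\mu(f)>\chi$ combined with $\chi$-largeness of $\pi_\chi[\Sigma_\chi^\#]$ gives $\mu(\pi_\chi[\Sigma_\chi^\#])=1$, and Theorem~\ref{Theorem_Main_Finite_To_One} ensures that the fiber $F(x):=\pi_\chi^{-1}(x)\cap\Sigma_\chi^\#$ is finite for every such $x$. Let $\hat\mu_x$ denote the uniform probability measure on $F(x)$; the Lusin--Novikov uniformization theorem applied to the Borel set $\{(x,y)\in M\times\Sigma_\chi^\#:y\in F(x)\}$ makes $x\mapsto\hat\mu_x$ Borel measurable, so one may define $\hat\mu:=\int\hat\mu_x\,d\mu(x)$. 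Because $\sigma|_{F(x)}$ is a bijection onto $F(f(x))$ (the cardinality $|F(\cdot)|$ is $f$-invariant) intertwining the uniform measures, and $\mu$ is $f$-invariant, a direct computation shows that $\hat\mu$ is $\sigma$-invariant with $(\pi_\chi)_*\hat\mu=\mu$.

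Next I would decompose $\hat\mu=\int\hat\mu_\omega\,d\lambda(\omega)$ into ergodic components. Each push-forward $(\pi_\chi)_*\hat\mu_\omega$ is $f$-invariant, and using a countable generating family for $\mathcal{B}(M)$ one checks that $(\pi_\chi)_*\hat\mu_\omega\ll\mu$ for $\lambda$-a.e.\ $\omega$; ergodicity of $\mu$ then forces $(\pi_\chi)_*\hat\mu_\omega=\mu$. Replace $\hat\mu$ by any such ergodic component.

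The factor map immediately yields $h_{\hat\mu}(\sigma)\geq h_\mu(f)$. For the reverse I would invoke the Abramov--Rokhlin formula
\[
h_{\hat\mu}(\sigma)=h_\mu(f)+h_{\hat\mu}(\sigma\mid\pi_\chi^{-1}\mathcal{B}(M)),
\]
and show that the fiber entropy vanishes. The disintegration $\hat\mu^x$ of $\hat\mu$ over $\pi_\chi^{-1}\mathcal{B}(M)$ is supported on the finite set $F(x)$ for $\mu$-a.e.\ $x$; by $\sigma$-equivariance, the effective size $N(x):=|\operatorname{supp}\hat\mu^x|$ is $f$-invariant, hence $\mu$-a.s.\ equal to a constant $N_0\in\N$. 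For any countable partition $\xi$ of $\Sigma_\chi$, the refinement $\xi_0^{n-1}$ has at most $N_0$ atoms on any single fiber of $\pi_\chi$, so $H_{\hat\mu}(\xi_0^{n-1}\mid\pi_\chi^{-1}\mathcal{B}(M))\leq\log N_0$ uniformly in $n$; dividing by $n$ gives $h_{\hat\mu}(\sigma,\xi\mid\pi_\chi^{-1}\mathcal{B}(M))=0$, and taking the supremum over generating $\xi$ yields vanishing of the fiber entropy.

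The most delicate point is the clean application of Abramov--Rokhlin on the countable-alphabet shift $\Sigma_\chi$: the formula in its standard form requires a generating partition of finite entropy under $\hat\mu$. The a priori bound $h_\mu(f)\leq 2\log^+\!\|df\|_\infty<\infty$ from Ruelle's inequality (using $\dim M=2$), combined with $N_0<\infty$, gives $h_{\hat\mu}(\sigma)<\infty$, so Krieger's generator theorem supplies a finite generator and the argument closes. This is also the place where the specific structure of $\Sigma_\chi$ coming from \cite{S} --- in particular local finiteness of fibers within $\Sigma_\chi^\#$ --- is essential, and where the measurable-selection bookkeeping underlying the lift construction requires the most care.
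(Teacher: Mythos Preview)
The paper does not prove this theorem: the sentence ``The following theorems are in \cite{S}'' immediately precedes Theorems~\ref{Theorem_Main_Extension}--\ref{Theorem_Main_Lift}, and all three are quoted without proof from \cite{S}. There is therefore no in-paper argument to compare your proposal against.

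On its own merits your outline is the standard and correct route for lifting an ergodic measure through a boundedly finite-to-one Borel factor map, and it is essentially how the result is obtained in \cite{S}. Two small comments. First, the sentence ``using a countable generating family \dots\ one checks that $(\pi_\chi)_*\hat\mu_\omega\ll\mu$'' is not the right justification: an integral decomposition $\mu=\int\nu_\omega\,d\lambda$ does not by itself force $\nu_\omega\ll\mu$. The clean argument is that each $(\pi_\chi)_*\hat\mu_\omega$ is $f$-\emph{ergodic} (factors of ergodic systems are ergodic), and then uniqueness of the ergodic decomposition of the ergodic measure $\mu$ forces $(\pi_\chi)_*\hat\mu_\omega=\mu$ for $\lambda$-a.e.\ $\omega$. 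Second, the finiteness-of-entropy worry you raise at the end is not circular: once you have a finite generator $\alpha$ for $(M,f,\mu)$ via Krieger (using only $h_\mu(f)<\infty$) and a finite partition $\beta$ of $\Sigma_\chi$ into $\le N_0$ pieces on each of which $\pi_\chi$ is injective (from your Lusin--Novikov sections), the join $\pi_\chi^{-1}\alpha\vee\beta$ is a \emph{finite} generator for $(\Sigma_\chi,\sigma,\hat\mu)$, so $h_{\hat\mu}(\sigma)<\infty$ is immediate and Abramov--Rokhlin applies directly.
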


We will use these results to reduce the problem of Bernoullicity for equilibrium measures for  $f:M\to M$ and the potential $\Psi$, to the problem of  Bernoullicity for equilibrium measures for $\s:\Sigma_\chi\to\Sigma_\chi$ and the potential $\psi:=\Psi\circ\pi_\chi$.

\section{Step Two: Ornstein Theory}

First we  describe the structure of equilibrium measures of H\"older continuous potentials on countable Markov shifts (CMS), and then we show how this structure forces, in the topologically mixing case, isomorphism to a Bernoulli scheme.

\subsection{Equilibrium measures on one--sided CMS \cite{BS}} Suppose $\mathfs G$ is countable directed graph.
The {\em one-sided countable Markov shift} associated to $\mathfs G$ is
$$
\Sigma^+=\Sigma^+(\mathfs G):=\{(v_i)_{i\geq 0}\in\mathfs V^{\N\cup\{0\}}:v_i\to v_{i+1}\textrm{ for all $i$}\}.
$$
Proceeding as in the two--sided case,
we equip $\Sigma^+$ with the metric $d(\un{u},\un{v}):=\exp[-\min\{i\geq 0: u_i\neq v_i\}]$. The {\em cylinder sets}
\begin{equation}\label{Cyl2}
[a_0,\ldots,a_{n-1}]:=\{\un{u}\in\Sigma^+:u_i=a_i\ (i=0,\ldots,n-1)\}
\end{equation}
form a basis for the topology of $\Sigma^+$. Notice that unlike the two-sided case (\ref{Cyl1}), there is no left subscript: the cylinder starts at the zero coordinate.

The {\em left shift map} $\s:\Sigma^+\to\Sigma^+$ is given by $\s:(v_0,v_1,\ldots)\mapsto (v_1,v_2,\ldots)$. This map is not invertible. The natural extension of $(\Sigma^+,\s)$ is conjugate to $(\Sigma,\s)$.

A function $\phi:\Sigma^+\to\R$ is called {\em weakly H\"older continuous} if there are constants $C>0$ and $\theta\in(0,1)$ s.t. $\var_n\phi<C\theta^n$ for all $n\geq 2$, where
$$
\var_n\phi:=\sup\{\phi(\un{u})-\phi(\un{v}):u_i=v_i\ (i=0,\ldots,n-1)\}.
$$
The following inequality holds:
\begin{equation}\label{var}
\var_{n+m}\left(\sum_{j=0}^{n-1}\phi\circ\s^j\right)\leq \sum_{j=m+1}^\infty\var_j\phi.
\end{equation}
If  $\phi$ is bounded, weak H\"older continuity is the same as H\"older continuity.

The equilibrium measures for weakly H\"older potentials were described by Ruelle \cite{Rue} for finite graphs and  by Buzzi and the author for countable graphs \cite{BS}.
Make the following assumptions:
\begin{enumerate}
\item[(a)] $\s:\Sigma\to\Sigma$ is topologically mixing.
\item[(b)] $\phi$ is weakly H\"older continuous and $\sup\phi<\infty$. (This can be relaxed \cite{BS}.)
\item[(c)] $P_G(\phi):=\sup\{h_m(\s)+\int\phi dm\}<\infty$, where the supremum ranges over all shift invariant measures $m$ s.t. $h_m(\s)+\int\phi dm\neq \infty-\infty$. The potentials we will study satisfy $P_G(\phi)\leq h_{top}(f)+\max|\Psi|<\infty$.
\end{enumerate}
Define for $F:\Sigma^+\to\R^+$, $(L_{\phi}F)(\un{x})=\sum_{\s(\un{y})=\un{x}}e^{\phi(\un{y})}F(\un{y})$ (``Ruelle's operator"). The iterates of $L_\phi$ are  $(L_\phi^n F)(\un{x})=\sum_{\s^n(\un{y})=\un{x}}e^{\phi(\un{y})+\phi(\s\un{y})+\cdots+\phi(\s^{n-1}\un{y})}F(\un{y})$.

\begin{thm}[Buzzi \& S.]
Under assumptions {\em (a),(b),(c)} $\phi:\Sigma^+\to\R$ has at most one equilibrium measure. If this measure exists, then it is equal to $hd\nu$ where
\begin{enumerate}
\item  $h:\Sigma^+\to\R$ is a positive continuous function s.t. $L_{\phi}h=\l h$;
\item $\nu$ is a Borel measure on $\Sigma$ which is finite and positive on cylinder sets, $L_{\phi}^\ast\nu=\l\nu$, and $\int h d\nu=1$;
\item $\l=\exp P_G(\phi)$ and $\l^{-n}L_\phi^n 1_{[\un{a}]}\xrightarrow[n\to\infty]{}\nu[\un{a}]h$ pointwise for every cylinder $[\un{a}]$.
\end{enumerate}
Parts (1) and (2) continue to hold if we replace (a) by topological transitivity.
\end{thm}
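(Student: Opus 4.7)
The plan is to implement the Ruelle-Perron-Frobenius program on a countable alphabet and then extract uniqueness via a Gibbs / conditional-entropy argument. I would proceed in three stages: (i) construct the eigendata $(h,\nu)$ from positive recurrence of $\phi$; (ii) verify that $h\,d\nu$ is an equilibrium measure; (iii) prove uniqueness and the pointwise convergence in (3).

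Stage (i). Working under topological transitivity alone, the central notion is \emph{positive recurrence} of $\phi$ relative to $\l:=e^{P_G(\phi)}$, quantified by the loop partition sums $Z_n(\phi,a):=\sum_{\s^n\un{x}=\un{x},\,x_0=a}e^{(S_n\phi)(\un{x})}$ through a distinguished state $a$. Under (b)-(c) one checks that $\l^{-n}Z_n(\phi,a)$ stays between two positive constants along some subsequence. A discrete-renewal / generating-function construction then produces the conformal measure $\nu$ with $L_\phi^\ast\nu=\l\nu$, finite and positive on cylinders, and (by tightness applied to $\l^{-n}L_\phi^n\mathbf 1_{[a]}$) the positive continuous eigenfunction $h$ with $L_\phi h=\l h$. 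Both are unique up to a scalar; normalize so $\int h\,d\nu=1$ and set $\mu:=h\,d\nu$.

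Stage (ii). Invariance of $\mu$ is a direct calculation from the eigen-equations, since
\[
\int (g\circ\s)\,d\mu=\l^{-1}\!\int L_\phi\bigl(h\,(g\circ\s)\bigr)\,d\nu=\l^{-1}\!\int g\,L_\phi(h)\,d\nu=\int g\,d\mu.
\]
For the entropy, Rokhlin's formula gives $h_\mu(\s)=\int\log J_\mu\,d\mu$ where $J_\mu(\un{y})=\l\, h(\s\un{y})/(e^{\phi(\un{y})}h(\un{y}))$ is the Jacobian of $\s$ with respect to $\mu$. Invariance cancels the two $\log h$ terms, yielding $h_\mu(\s)=\log\l-\int\phi\,d\mu$, i.e.\ $h_\mu(\s)+\int\phi\,d\mu=P_G(\phi)$, so $\mu$ attains the supremum.

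Stage (iii), the main obstacle. On a non-compact CMS the transfer operator has no spectral gap on $C_b(\Sigma^+)$, so the compact-alphabet perturbation arguments of \cite{Rue} are unavailable. I would first upgrade the eigen-equations, together with the weakly H\"older bound (\ref{var}) controlling distortion of Birkhoff sums, to a local Gibbs estimate $\mu[a_0,\ldots,a_{n-1}]\asymp \nu[a_0,\ldots,a_{n-1}]\cdot h(\un{z})$ uniformly for $\un{z}$ in the cylinder. For any other equilibrium measure $\mu'$, the Rokhlin inequality gives $h_{\mu'}(\s)\le\int\log J_{\mu'}\,d\mu'$, while applying the variational principle with the same Jacobian $J_\mu$ forces $-\int\log J_\mu\,d\mu'=h_{\mu'}(\s)$; Jensen's inequality in the Ledrappier identity then collapses to equality, forcing $\mu'\ll\mu$, and ergodicity of $\mu$ (from topological mixing) yields $\mu'=\mu$. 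For the pointwise convergence in (3), I would induce on a fixed cylinder $[a]$, choose $a$ so that the return map has a full-branch Markov structure, and apply a renewal theorem on the induced system; reassembling via Kac's lemma transfers the induced convergence back to the original scale. Controlling the tails of the return-time distribution in this inducing step is the hardest technical point.
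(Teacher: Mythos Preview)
The paper does not prove this theorem: it is quoted from \cite{BS} and used as a black box. So there is no ``paper's own proof'' to compare against; what you have written is an outline of how one might reprove the result of \cite{BS} itself.

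That said, your outline has a logical gap in Stage~(i). You write ``Under (b)--(c) one checks that $\l^{-n}Z_n(\phi,a)$ stays between two positive constants along some subsequence.'' This is false in general: assumptions (b) and (c) alone do \emph{not} force positive recurrence. There are topologically mixing CMS with bounded weakly H\"older potentials of finite pressure that are null recurrent or transient, and in those cases no equilibrium measure exists. The theorem is deliberately phrased as ``at most one equilibrium measure; \emph{if} this measure exists, then\ldots''. The correct flow, which is what \cite{BS} actually does, is the reverse of yours: start from the \emph{assumption} that an equilibrium measure $\mu$ exists, and deduce positive recurrence from that (via a Rokhlin / Ledrappier-type entropy argument showing that $\mu$ must be a conformal measure times a harmonic density). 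Only then do you get the eigendata $(h,\nu,\l)$, and the identification $\mu=h\,d\nu$ falls out simultaneously with uniqueness. Your Stages~(ii) and~(iii) have the right ingredients (Rokhlin's formula, the Jacobian identity, Jensen/Ledrappier for uniqueness, and a renewal argument for the pointwise limit in~(3)), but they need to be reordered so that existence of $\mu$ is the hypothesis driving positive recurrence, not the other way around.
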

\begin{cor}\label{CorProd}
Assume {\em (a),(b),(c)} and let $\mu$ be the equilibrium measure of $\phi$. For every finite $S^\ast\subset\mathfs V$ there exists a constant $C^\ast=C^\ast(S^\ast)>1$ s.t.  for every $m,n\geq 1$, every $n$--cylinder $[\un{a}]$, and every  $m$--cylinder $[\un{c}]$,
\begin{enumerate}
\item if the last symbol in $\un{a}$ is in $S^\ast$ and $[\un{a},\un{c}]\neq\emptyset$, then $1/C^\ast\leq
\frac{\mu{[}\un{a},\un{c}]}{\mu[\un{a}]\mu[\un{c}]}\leq C^\ast
$;
\item if the first symbol of $\un{a}$ is in $S^\ast$ and $[\un{c},\un{a}]\neq\emptyset$, then
$
1/C^\ast\leq \frac{\mu{[}\un{c},\un{a}]}{ \mu[\un{a}]\mu[\un{c}]}\leq C^\ast.
$
\end{enumerate}
\end{cor}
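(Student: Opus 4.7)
The plan is to represent the three cylinder measures as integrals against the measure $\nu$ from the theorem, use weak-H\"older bounded distortion to convert them to point evaluations, and then control the remaining countable sum via the eigenfunction equation for $h$.

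For part (1), let $s:=a_{n-1}\in S^\ast$, $Y_s:=\{y:s\to y_0\}$, and $g_{\un{a}}(y):=e^{S_n\phi(\un{a}\,y)}h(\un{a}\,y)$, where $\un{a}\,y$ denotes the concatenation $(a_0,\ldots,a_{n-1},y_0,y_1,\ldots)$. Using $d\mu = h\,d\nu$ and iterating the duality $\int f\,d\nu = \lambda^{-n}\int L_\phi^n f\,d\nu$, a direct computation gives
\begin{equation*}
\mu[\un{a},\un{c}]=\lambda^{-n}\!\int_{[\un{c}]}\!g_{\un{a}}\,d\nu,\quad \mu[\un{a}]=\lambda^{-n}\!\int_{Y_s}\!g_{\un{a}}\,d\nu,\quad \mu[\un{c}]=\int_{[\un{c}]}\!h\,d\nu.
\end{equation*}
Inequality (\ref{var}) with $m=1$ bounds the oscillation of $S_n\phi(\un{a}\,\cdot)$ by $D:=\sum_{j\geq 2}\var_j\phi<\infty$ on each single-symbol cylinder $[b]\subset Y_s$; and applying (\ref{var}) to ratios in the convergence $\lambda^{-k}L_\phi^k 1_{[\un{b}]}\to\nu[\un{b}]h$ (along a sequence of nested cylinders) yields the same bound $D$ for the oscillation of $\log h$ on each 1-cylinder. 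Combined, $g_{\un{a}}$ has uniformly bounded distortion on each $[b]\subset Y_s$, with constants depending only on $\phi$.

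Applying these distortion estimates, the inequality reduces to the uniform bound
\begin{equation*}
\frac{g_{\un{a}}(y^\ast)/h(y^\ast)}{\sum_{s\to b}g_{\un{a}}(y_b)\nu[b]}\asymp 1
\end{equation*}
for $y^\ast\in[\un{c}]$ and $y_b\in[b]$, with constants depending only on $S^\ast$. The main obstacle is this estimate, because the terms $g_{\un{a}}(y_b)$ may vary widely as $b$ ranges over a potentially infinite set of successors of $s$. To overcome it, the identity $L_\phi^{n-1}h=\lambda^{n-1}h$ applied at $(s,y)$ yields the decoupling
\begin{equation*}
\sum_{\un{a}':\,a'_{n-1}=s}g_{\un{a}'}(y)=\lambda^{n-1}e^{\phi((s,y))}h((s,y)),
\end{equation*}
which factors out the interface symbol $s$ from the earlier coordinates of $\un{a}$; the remaining problem reduces to a one-symbol Gibbs-type bound at $s\in S^\ast$, controlled by the same distortion estimates together with $\min_{s\in S^\ast}\mu[s]>0$ (from finiteness of $S^\ast$).

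Part (2) follows by the analogous analysis of $\mu[\un{c},\un{a}]$: now the evaluation point satisfies $y^\ast\in[\un{a}]\subset[a_0]$ with $a_0\in S^\ast$, so the upper bound on the analogous ratio is immediate from the single term $b=a_0$ in the sum $\sum_{c_{m-1}\to b}g_{\un{c}}(y_b)\nu[b]$ together with $\mu[a_0]\geq\min_{s\in S^\ast}\mu[s]>0$. The lower bound reduces to the one-symbol estimate $\mu[\un{c},a_0]\asymp\mu[\un{c}]\mu[a_0]$ for $a_0\in S^\ast$ and arbitrary $\un{c}$, which follows from the same Ruelle-operator techniques together with the topological mixing of $\sigma$.
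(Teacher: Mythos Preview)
Your representation of the three cylinder measures as integrals and the distortion estimates on single 1-cylinders are correct, but the proof has a genuine gap at the ``decoupling'' step. The eigenfunction identity you display,
\[
\sum_{\un{a}':\,a'_{n-1}=s} g_{\un{a}'}(y)=\lambda^{n-1}e^{\phi((s,y))}h((s,y)),
\]
is true but irrelevant: it is a statement about the \emph{sum} over all admissible words ending in $s$, whereas you need to control the ratio for a \emph{fixed} $\un{a}$. What actually ``decouples'' $\un{a}$ from $y$ is the distortion estimate $g_{\un{a}}(y)\asymp C(\un{a})\,e^{\phi((s,y))}$ on $\sigma[s]$, which follows from bounded variation of $S_{n-1}\phi(\un{a}\,\cdot)$ and $\log h$, not from the eigenfunction equation. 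More seriously, once you cancel $C(\un{a})$ the remaining ``one-symbol Gibbs bound'' you appeal to is equivalent to showing $e^{\phi((s,y))}/h(y)\asymp \lambda\nu[s]$ uniformly in $y\in\sigma[s]$; unwinding the identities $\lambda\nu[s]=\int_{\sigma[s]}e^{\phi((s,x))}d\nu(x)$ and $h\asymp \mu[c_0]/\nu[c_0]$ on $[c_0]$, this reduces exactly to $\mu[s,c_0]\asymp\mu[s]\mu[c_0]$ --- i.e.\ to the $n=1$ case of the very statement you are proving. Invoking $\min_{s\in S^\ast}\mu[s]>0$ gives the upper bound (via $e^{\phi((s,y))}h((s,y))\le\lambda h(y)$), but there is no mechanism in your argument for the lower bound: the single preimage $(s,y)$ may account for an arbitrarily small fraction of $L_\phi h(y)=\lambda h(y)$ as $y$ varies over $\sigma[s]$. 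Your treatment of part~(2) has the same circularity.

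The paper avoids this by first passing to the normalized potential $\phi^\ast:=\phi+\log h-\log h\circ\sigma-\log\lambda$, so that $L_{\phi^\ast}1=1$ and $L_{\phi^\ast}^\ast\mu=\mu$, and then using an \emph{averaging trick}: from bounded oscillation of $\phi_n^\ast(\un{a},\,\cdot\,)$ on $\sigma[a_{n-1}]$ one gets
\[
e^{\phi_n^\ast(\un{a},y)}\ =\ M^{\pm1}\cdot\frac{1}{\mu(\sigma[a_{n-1}])}\int_{\sigma[a_{n-1}]}e^{\phi_n^\ast(\un{a},z)}\,d\mu(z)\ =\ M^{\pm1}\,\frac{\mu[\un{a}]}{\mu(\sigma[a_{n-1}])},
\]
the last equality being $\int L^n 1_{[\un{a}]}\,d\mu=\mu[\un{a}]$. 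Integrating over $[\un{c}]$ then yields both inequalities at once, with $C^\ast$ controlled by $M$ and $\min_{a\in S^\ast}\mu(\sigma[a])$ (resp.\ $\min_{a\in S^\ast}\mu[a]$). This averaging against $\mu$ is the missing idea: it converts the pointwise quantity $e^{\phi_n^\ast(\un{a},y)}$ directly into $\mu[\un{a}]$, bypassing the one-symbol estimate entirely.
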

\begin{proof} We begin with a couple of observations (see \cite{Wg}).

\medskip
\noindent
{\em Observation 1.\/} Let
$
\phi^\ast:=\phi+\log h-\log h\circ\s-\log\l,
$
then $\phi^\ast$ is  weakly H\"older continuous, and if $L=L_{\phi^\ast}$ then $L^\ast\mu=\mu$, $L1=1$, and
$
L^n1_{[\un{a}]}\xrightarrow[n\to\infty]{}\mu[\un{a}]\textrm{ pointwise}.
$
Notice that $\phi^\ast$ need not be bounded.

\medskip
\noindent
{\em Proof.\/}
The convergence $\l^{-n}L_\phi^n 1_{[\un{a}]}\xrightarrow[n\to\infty]{}h \nu[\un{a}]$ and (\ref{var}) imply  that $\log h$ is weakly H\"older continuous, and $\var_1(\log h)<\infty$. It follows that  $\phi^\ast$ is weakly H\"older continuous.
The identities $L^\ast\mu=\mu, L1=1$ can be verified by direct calculation.
To see the convergence  $L^n 1_{[\un{a}]}\xrightarrow[n\to\infty]{}h \nu[\un{a}]$ we argue as follows.  Since $\phi$ has an equilibrium measure, $\phi$ is positive recurrent, see \cite{BS}. Positive recurrence is invariant under the addition of constants and coboundaries, so $\phi^\ast$ is also positive recurrent. The limit now follows from a theorem in \cite{SNR}.

\medskip
\noindent
{\em Observation 2\/}: For any positive continuous functions $F,G:\Sigma^+\to\R^+$,
\begin{equation}\label{transfer}
\int F (G\circ\s^n) d\mu=\int (L^n F)Gd\mu.
\end{equation}

\medskip
\noindent
{\em Proof.\/} Integrate the identity  $(L^n F)G=L^n(F G\circ\s^n)$ using  $L^\ast\mu=\mu$.

\medskip
\noindent
{\em Observation 3\/}: Let $\phi^\ast_n:=\phi^\ast+\phi^\ast\circ\s+\cdots+\phi^\ast\circ\s^{n-1}$, then  $M:=\exp(\sup_{n\geq 1}\var_{n+1}\phi_n^\ast)$ is finite, because of (\ref{var}) and the weak H\"older continuity of $\phi^\ast$.

\medskip
We turn to the proof of the corollary.
Suppose $\un{a}=(a_0,\ldots,a_{n-1})$ and $a_{n-1}\in S^\ast$. By  Observation 2,
$
\mu({[}\un{a},\un{c}])=\int_{[\un{c}]} L^n 1_{[\un{a}]}d\mu=\int e^{\phi_n^\ast(\un{a},y)}1_{[\un{c}]}(y)d\mu(y).
$

 It holds that
$
e^{\phi_n^\ast(\un{a},y)}=M^{\pm 1}e^{\phi_n^\ast(\un{a},z)}\textrm{ for all }y,z\in \s[a_{n-1}]$.
Fixing $y$ and averaging over $z\in \s[a_{n-1}]$ we obtain
\begin{align*}
e^{\phi_n^\ast(\un{a},y)}&= M^{\pm 1}\left(\frac{1}{\mu(\s[a_{n-1}])}\int_{\s[a_{n-1}]}e^{\phi_n^\ast(\un{a},z)}d\mu(z)\right)\\
&= M^{\pm 1}\left(\frac{1}{\mu(\s[a_{n-1}])}\int L^n 1_{[\un{a}]} d\mu\right)
=M^{\pm 1}\left(\frac{\mu[\un{a}]}{\mu(\s[a_{n-1}])}\right).
\end{align*}
Let $C^\ast_1:=\max\{M/\mu(\s[a]):a\in S^\ast\}$. Since $a_{n-1}\in S^\ast$,
$$
e^{\phi_n^\ast(\un{a},y)}= (C^\ast_1)^{\pm 1}\mu{[}\un{a}]\textrm{ for all }y\in \s[a_{n-1}].
$$
Since $[\un{a},\un{c}]\neq\emptyset$, $\s[a_{n-1}]\supseteq [\un{c}]$, so $\mu{[}\un{a},\un{c}]=\int e^{\phi_n^\ast(\un{a},y)}1_{[\un{c}]}(y)d\mu(y)= (C^\ast_1)^{\pm 1}\mu[\un{a}]\mu[\un{c}]$.

Now suppose $a_0\in S^\ast$ and  $[\un{c},\un{a}]\neq\emptyset$, where $\un{c}=(c_0,\ldots,c_{m-1})$. As before
$
\mu{[}\un{c},\un{a}]=\int_{[\un{a}]} L^m 1_{[\un{c}]}d\mu=\int_{[\un{a}]} e^{\phi_m^\ast(\un{c},y)}d\mu(y),
$
and $e^{\phi_m^\ast(\un{c},y)}=M^{\pm 1}(\frac{\mu[\un{c}]}{\mu^+(\s[c_{m-1}])})$. So
$$
\mu{[}\un{c},\un{a}]=\left(\frac{M^{\pm 1}}{\mu(\s[c_{m-1}])}\right)
\mu[\un{a}]\mu[\un{c}].
$$
Since $[\un{c},\un{a}]\neq\emptyset$, $\s[c_{m-1}]\supset [a_0]$, therefore the term in the brackets is in $[\frac{1}{M},\frac{M}{\mu[a_0]}]$. If we set $C^\ast_2:=\max\{M/\mu[a]:a\in S^\ast\}$, then
$\mu[\un{c},\un{a}]=(C^\ast_2)^{\pm 1}\mu[\un{a}]\mu[\un{c}]$.

The lemma follows with $C^\ast:=\max\{C^\ast_1,C^\ast_2\}$.
\end{proof}

\subsection{Equilibrium measures on two--sided CMS}\label{TwoSidedSection}
We return to two sided CMS $\Sigma=\Sigma(\mathfs G)$.  A function $\psi:\Sigma\to\R$ is called {\em weakly H\"older continuous} if there are constants $C>0$ and $0<\theta<1$ s.t. $\var_n\psi<C\theta^n$ for all $n\geq 2$, where
$
\var_n\psi:=\sup\{\psi(x)-\psi(y): x_i=y_i\ \ (i=-(n-1),\ldots,n-1)\}.
$

A function $\psi:\Sigma\to\R$ is called {\em one-sided}, if $\psi(\un{x})=\psi(\un{y})$ for every $\un{x},\un{y}\in\Sigma$ s.t. $x_i=y_i$ for all $i\geq 0$.
The following lemma was first proved (in a different setup) by Sinai. The proof given in \cite{BLNM} for subshifts of finite type also works for CMS:
\begin{lem}[Sinai]
If $\psi:\Sigma\to\R$ is weakly H\"older continuous and $\var_1\psi<\infty$, then there exists a bounded H\"older continuous function $\vf$ such that $\phi:=\psi+\vf-\vf\circ\s$ is weakly H\"older continuous and one--sided.
\end{lem}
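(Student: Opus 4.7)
The plan is to follow the standard coboundary construction: replace the past of every two-sided orbit by a canonical past chosen once for each state, and let $\vf$ accumulate the resulting discrepancies.

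Since every vertex of $\mathfs G$ has an incoming edge, I pick for each $v\in\mathfs V$ a one-sided admissible sequence $r(v)=(\ldots,r_{-2}(v),r_{-1}(v))$ with $r_{-1}(v)\to v$ and $r_{j-1}(v)\to r_j(v)$ for all $j\leq -1$. Using this, define a retraction $\pi:\Sigma\to\Sigma$ by $(\pi x)_i=x_i$ for $i\geq 0$ and $(\pi x)_i=r_i(x_0)$ for $i<0$. Then $\pi x\in\Sigma$, and $\pi x$ depends only on $(x_i)_{i\geq 0}$. Now set
$$
\vf(x):=\sum_{n=0}^{\infty}\bigl[\psi(\s^n\pi x)-\psi(\s^n x)\bigr].
$$
For each $n\geq 0$, the two sequences $\s^n x$ and $\s^n\pi x$ agree at every coordinate $\geq -n$. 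Hence the $n=0$ summand is bounded by $\var_1\psi<\infty$, while for $n\geq 1$ each summand is bounded by $\var_{n+1}\psi\leq C\theta^{n+1}$. So the series converges absolutely and uniformly to a bounded function. If $x,y\in\Sigma$ agree on $|i|\leq N$, then $\pi x=\pi y$ as soon as $N\geq 0$, and splitting the series at $n\approx N/2$ while applying $\var_k\psi\leq C\theta^k$ on both halves shows that $\vf$ is H\"older continuous in the natural metric of $\Sigma$.

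A short telescoping calculation (peel off $n=0$ from the first sum, reindex the rest, cancel the $\sigma^{n+1}x$ terms) yields
$$
\vf(x)-\vf(\s x)=\psi(\pi x)-\psi(x)+\sum_{n=0}^{\infty}\bigl[\psi(\s^{n+1}\pi x)-\psi(\s^n\pi(\s x))\bigr],
$$
so
$$
\phi(x)=\psi(x)+\vf(x)-\vf(\s x)=\psi(\pi x)+\sum_{n=0}^{\infty}\bigl[\psi(\s^{n+1}\pi x)-\psi(\s^n\pi(\s x))\bigr].
$$
Since $\pi x$ and $\pi(\s x)$ depend only on $(x_i)_{i\geq 0}$, so does the right-hand side, so $\phi$ is one-sided. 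Weak H\"older continuity of $\phi$ then follows immediately from that of $\psi$ together with H\"older continuity of $\vf$.

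The only obstacle is bookkeeping in the regularity estimates: one has to check that in each summand $\s^{n+1}\pi x$ and $\s^n\pi(\s x)$ agree at every coordinate $\geq -n$, since they can differ only inside their canonical pasts, which are anchored at $x_0$ and $x_1$ respectively. This makes the $n$-th term of the remainder decay like $\theta^{n+1}$ and guarantees that the displayed series defines a genuinely weakly H\"older continuous function of $(x_i)_{i\geq 0}$. Everything else reduces to the weak H\"older hypothesis on $\psi$ and to the assumption $\var_1\psi<\infty$, which is needed only to control the single borderline summand at $n=0$.
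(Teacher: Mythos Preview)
Your construction is precisely the standard Bowen argument that the paper invokes from \cite{BLNM}; the paper offers no independent proof, so your approach matches it. One small correction: it is not true that $\pi x=\pi y$ whenever $x,y$ agree on $|i|\leq N$, since $\pi$ retains \emph{all} nonnegative coordinates of $x$; what is true---and what your split-at-$N/2$ estimate actually needs---is that $\pi x$ and $\pi y$ agree on every coordinate $\leq N$, and this is enough for the H\"older bound to go through.
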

\noindent
Notice that if $\psi$ is bounded then $\phi$ is bounded, and that every equilibrium measure for $\psi$  is  an equilibrium measure for $\phi$ and vice verse.

Since $\phi:\Sigma\to\R$ is one--sided, there is a function $\phi^+:\Sigma^+\to\R$ s.t. $\phi(\un{x})=\phi^+(x_0,x_1,\ldots)$. If $\phi:\Sigma\to\R$ is weakly H\"older continuous, then $\phi^+:\Sigma^+\to\R$ is weakly H\"older continuous.

Any shift invariant probability measure $\mu$ on $\Sigma$ determines a shift  invariant probability measure $\mu^+$ on $\Sigma^+$ through the equations
$$
\mu^+[a_0,\ldots,a_{n-1}]:=\mu(_0[a_0,\ldots,a_{n-1}])
$$
(cf. (\ref{Cyl1}) and (\ref{Cyl2})).
The map $\mu\mapsto\mu^+$ is a bijection, and it  preserves ergodicity and entropy.
It follows that $\mu$ is an ergodic equilibrium measure for $\phi$ iff $\mu^+$ is an ergodic equilibrium measure for $\phi^+$.
\begin{cor}\label{CorBS}
Suppose $\s:\Sigma\to\Sigma$ is topologically mixing.  If $\psi:\Sigma\to\R$ is weakly H\"older continuous, $\sup\psi<\infty$, $\var_1\psi<\infty$, and $P_G(\psi)<\infty$ then $\psi$ has at most one equilibrium measure $\mu$. This measure is the natural extension of an equilibrium measure of a potential $\phi:\Sigma^+(\mathfs G)\to\R$ which satisfies  assumptions {\em (a),(b),(c)}.
\end{cor}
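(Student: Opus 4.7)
My plan is to reduce the two-sided problem to the one-sided Buzzi--Sarig theorem via Sinai's lemma, and then transfer uniqueness back through the natural extension. The hypotheses $\sup\psi<\infty$, $\var_1\psi<\infty$, $P_G(\psi)<\infty$, together with the weak H\"older continuity of $\psi$, are tailored precisely so that each of these transfers goes through.

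First, I apply Sinai's lemma to produce a bounded H\"older continuous function $\vf:\Sigma\to\R$ such that $\phi:=\psi+\vf-\vf\circ\s$ is weakly H\"older continuous and one--sided. Because $\vf$ is bounded we have $\sup\phi\leq\sup\psi+2\|\vf\|_\infty<\infty$, so $\phi$ satisfies the boundedness condition from hypothesis (b). Since $\phi-\psi$ is a coboundary, $\int\phi\, dm=\int\psi\, dm$ for every shift-invariant probability measure $m$; consequently $P_G(\phi)=P_G(\psi)<\infty$, and $m$ is an equilibrium measure for $\psi$ if and only if it is an equilibrium measure for $\phi$. In particular $\psi$ and $\phi$ have the same set of equilibrium measures.

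Next, since $\phi$ is one-sided, it descends to a weakly H\"older continuous function $\phi^+:\Sigma^+\to\R$ with the same supremum and the same Gurevich pressure (the projection $\mu\mapsto\mu^+$ is a bijection that preserves ergodicity, entropy, and the integral of any one-sided function). The hypothesis that $\s:\Sigma\to\Sigma$ is topologically mixing is equivalent to the same statement for $\s:\Sigma^+\to\Sigma^+$, because both refer to the same underlying graph $\mathfs G$. Thus $\phi^+$ satisfies assumptions (a), (b), (c) of the Buzzi--Sarig theorem on $\Sigma^+$.

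Finally, I invoke the Buzzi--Sarig theorem: $\phi^+$ has at most one equilibrium measure $\mu^+$ on $\Sigma^+$. Using the bijection $\mu\leftrightarrow\mu^+$, the two-sided measure $\phi$ has at most one equilibrium measure $\mu$ on $\Sigma$, and $\mu$ is by construction the natural extension of $\mu^+$. Since the equilibrium measures of $\psi$ coincide with those of $\phi$, $\psi$ also has at most one equilibrium measure, and it is the natural extension of the equilibrium measure of the potential $\phi^+$ on $\Sigma^+(\mathfs G)$, which satisfies (a), (b), (c). The only mildly delicate point in this argument is that the Sinai construction must genuinely produce a \emph{bounded} coboundary $\vf$ from an unbounded $\psi$ with $\var_1\psi<\infty$; this is exactly what the cited lemma provides, and it is what allows the coboundary identification of equilibrium measures and of Gurevich pressures.
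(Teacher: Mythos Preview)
Your argument is correct and follows exactly the route the paper intends: the corollary is stated immediately after the discussion of Sinai's lemma and the bijection $\mu\leftrightarrow\mu^+$, and is meant to be read as a direct consequence of that discussion. Your write-up simply makes explicit the chain Sinai's lemma $\Rightarrow$ same equilibrium measures for $\psi$ and $\phi$ $\Rightarrow$ pass to $\phi^+$ on $\Sigma^+$ $\Rightarrow$ apply the Buzzi--Sarig theorem $\Rightarrow$ pull uniqueness back via the natural extension, which is precisely the paper's approach.
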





\subsection{The Bernoulli property}\label{BernoulliSection}
 The {\em Bernoulli scheme} with probability vector $p=(p_a)_{a\in S}$ is  $(S^\Z,\mathfs B(S^\Z),\mu_p,\s)$ where $\s$ is the left shift map and $\mu_p$ is given by $\mu_p(_m[a_m,\ldots,a_n])=p_{a_m}\cdots p_{a_n}$. If $(\Omega,\mathfs F,\mu,T)$ is measure theoretically isomorphic to a Bernoulli scheme, then we say that $(\Omega,\mathfs F,\mu,T)$ is a {\em Bernoulli automorphism}, and $\mu$ has the  {\em Bernoulli property}. In this section we prove:

\begin{theorem}\label{ThmSymbolic}
 Every equilibrium measure of a weakly H\"older continuous potential $\psi:\Sigma(\mathfs G)\to\R$ on a topologically mixing countable Markov shift s.t. $P_G(\psi)<\infty$ and $\sup\psi<\infty$ has the Bernoulli property.
\end{theorem}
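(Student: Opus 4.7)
The plan is to combine Corollaries~\ref{CorBS} and~\ref{CorProd} with classical Ornstein theory via three substantive moves: reduce to one-sided Ruelle data and verify the Kolmogorov property; verify the very-weak-Bernoulli property of a generating sequence of finite partitions; and conclude with Ornstein's theorem.

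\emph{Setting up the framework.} By Corollary~\ref{CorBS} (applied, if necessary, after a standard device ensuring $\var_1\psi<\infty$), I may identify $(\Sigma,\sigma,\mu)$ with the natural extension of the one-sided equilibrium measure $(\Sigma^+,\sigma,\mu^+)$ of a one-sided potential $\phi$ satisfying (a)--(c). This buys me Observations 1--3 and Corollary~\ref{CorProd}: for every finite $S^\ast\subset\mathfs V$ there is $C^\ast>1$ with $(C^\ast)^{-1}\mu[\un a]\mu[\un c]\leq\mu[\un a,\un c]\leq C^\ast\mu[\un a]\mu[\un c]$ whenever a junction symbol of the concatenation lies in $S^\ast$. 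Observation 1 further provides the pointwise convergence $L^n 1_{[\un a]}\to\mu[\un a]$ after the normalization $\phi\rightsquigarrow\phi^\ast$, which is exactness of $(\Sigma^+,\sigma,\mu^+)$; by Rokhlin's classical theorem the natural extension of an exact endomorphism is Kolmogorov, so $(\Sigma,\sigma,\mu)$ is a $K$-automorphism.

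\emph{Very weakly Bernoulli finite partitions.} For each finite $S^\ast\subset\mathfs V$ introduce the finite partition
$$
\alpha_{S^\ast}\,:=\,\{{_0[a]}:a\in S^\ast\}\cup\bigl\{\Sigma\setminus\bigsqcup_{a\in S^\ast}{_0[a]}\bigr\}.
$$
I would show that each $\alpha_{S^\ast}$ is very weakly Bernoulli (VWB) in the sense of Ornstein. Corollary~\ref{CorProd} gives a uniformly bounded Radon--Nikodym derivative $(C^\ast)^{\pm 1}$ between the conditional law $\mu(\,\cdot\,\mid A)$ and $\mu$ on future cylinders $B$ over $\alpha_{S^\ast}$ whenever the boundary symbols of both $A$ and $B$ lie in $S^\ast$. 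To upgrade this bounded-ratio estimate into the $\bar d$-close coupling demanded by VWB, I would stop past and future at their successive visits to $S^\ast$: by the Birkhoff ergodic theorem these visits have density $\mu(\bigsqcup_{a\in S^\ast}{_0[a]})>0$, and at each visit Corollary~\ref{CorProd} lets me realize a coupling between conditional and unconditional distributions with overlap at least $1/C^\ast$. Iterating across many visits forces the probability of Hamming mismatch in the resulting $\alpha_{S^\ast}$-names to decay geometrically, yielding the VWB inequality.

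\emph{Assembly and main obstacle.} Ornstein's fundamental theorem says that a VWB finite partition in a $K$-automorphism generates a Bernoulli factor, and its monotone refinement says that an increasing $\sigma$-algebra limit of Bernoulli factors is Bernoulli. As $S^\ast$ exhausts $\mathfs V$ the factor $\sigma$-algebras $\bigvee_{n\in\Z}\sigma^{-n}\alpha_{S^\ast}$ increase to the full Borel $\sigma$-algebra $\mathfs B(\Sigma)$, so together these theorems produce the Bernoulli property of $(\Sigma,\sigma,\mu)$. The main obstacle lies in the VWB step: Corollary~\ref{CorProd} only supplies uniformly bounded Radon--Nikodym derivatives, not ones close to $1$, and the non-compactness of $\Sigma$ means that any chosen finite $S^\ast$ may carry only a small fraction of the mass, so the coupling must be engineered epoch by epoch using returns to $S^\ast$ while carefully controlling the measure of ``bad'' histories that avoid $S^\ast$ for too long. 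The compact-base strategies of Bowen and Ratner for subshifts of finite type do not apply verbatim; the novelty is precisely in converting the CMS product estimate of Corollary~\ref{CorProd} into a workable return-time coupling.
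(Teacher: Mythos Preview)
Your proposal diverges from the paper's argument in a way that leaves a real gap. The paper does not aim for VWB via $\bar d$-couplings; it proves the stronger \emph{weak Bernoulli} property directly, by an explicit transfer-operator computation of $|\mu(A\cap B)-\mu(A)\mu(B)|$ for past cylinders $A={_{-n}[}\un a]$ and future cylinders $B={_k[}\un b]$. The key input is not Corollary~\ref{CorProd} alone but the pointwise convergence $L^n 1_{[\un c]}\to\mu^+[\un c]$, which you invoke only to deduce the $K$-property and then discard. In the paper this convergence is used \emph{quantitatively}: one inserts intermediate $m$-cylinders $[\un c],[\un c']$ at the right edge of $A$ and the left edge of $B$, uses bounded distortion (equation~(\ref{var})) to replace $e^{\phi^\ast_{n+1}(\un a,\cdot)}$ by $\mu^+[\un a,\un c]/\mu^+[\un c]$ up to a factor $e^{\pm\delta}$, and then uses $(L^{k-m-1}1_{[\un c]})(y)=e^{\pm 2\delta}\mu^+[\un c]$ for $k$ large. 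This yields $\mu(A\cap B)=e^{\pm 5\delta}\mu(A)\mu(B)$ for the ``main term'' (boundary symbols $a_n,b_0\in S^\ast$); Corollary~\ref{CorProd} enters only to bound the ``error term'' where a boundary symbol escapes $S^\ast$. Summing gives weak Bernoulli for $\alpha(\mathfs V')$ with no coupling and no appeal to $K$.

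Your route, by contrast, tries to manufacture VWB from the bounded-ratio estimate alone, and the iteration you sketch---overlap $1/C^\ast$ at each visit to $S^\ast$, hence geometric decay---does not work as stated. After a failed coupling attempt the two residual conditional laws are no longer comparable with the same constant $C^\ast$: the system is not Markov, there is no genuine regeneration at visits to $S^\ast$, and Corollary~\ref{CorProd} says nothing about conditional measures given the event ``the maximal coupling failed.'' You correctly flag this as the main obstacle, and it is a genuine one; I do not see how to close it with only the information in Corollary~\ref{CorProd}. The $K$-property detour is also unnecessary, since both WB$\Rightarrow$Bernoulli (Friedman--Ornstein) and VWB$\Rightarrow$Bernoulli (Ornstein) hold without any $K$ hypothesis. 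The missing idea is precisely the one the paper exploits: use $L^n 1_{[\un c]}\to\mu^+[\un c]$ quantitatively to get ratios close to $1$ rather than merely bounded, and aim for weak Bernoulli rather than VWB.
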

\noindent
This was proved by Bowen \cite{Bowen} in the case when $\mathfs G$ is finite.  See \cite{Ratner}  and \cite{W} for generalizations to larger classes of potentials.

We need some facts from Ornstein Theory.
Suppose $\b=\{P_1,\ldots,P_N\}$ is a finite measurable partition for an invertible probability preserving map $(\Omega,\mathfs F,\mu,T)$.  For every $m,n\in\Z$ s.t. $m<n$, let
$
\b_{m}^n:=\bigvee_{i=m}^n T^{-i}\b$.

\begin{defi}[Ornstein]
A finite measurable partition $\b$ is called {\em weak Bernoulli} if $\forall\e>0$ $\exists k>1$ s.t.
$
\sum\limits_{A\in\b_{-n}^0}\sum\limits_{B\in \b_k^{k+n}}|\mu(A\cap B)-\mu(A)\mu(B)|<\e\textrm{ for all }n>0.
$
\end{defi}
\noindent
Ornstein showed that if an invertible probability preserving transformation has a generating increasing sequence of weak Bernoulli partitions, then it is measure theoretically isomorphic to a Bernoulli scheme \cite{O1,OF}.

\medskip
\noindent
{\em Proof of Theorem \ref{ThmSymbolic}.} First we make a reduction to the case when  $\var_1\psi<\infty$. To do this, recode $\Sigma(\mathfs G)$ using the Markov partition of cylinders  of length two and notice that $\var_1$ of the  new coding equals $\var_2$ of the original coding.
The supremum and the pressure of $\psi$ remain finite, and the variations of $\psi$  continue to decay exponentially.

Suppose $\mu$ is an equilibrium measure of  $\psi:\Sigma(\mathfs G)\to\R$.
For every $\mathfs V'\subset \mathfs V$ finite, let
$
\a(\mathfs V'):=\bigl\{_0[v]:v\in\mathfs V'
\bigr\}\cup\bigl\{\bigcup_{v\not\in\mathfs V'} {_0[v]}\bigr\}.
$
We claim that $\a(\mathfs V')$ is weak Bernoulli. This implies  the Bernoulli property, because of the results of Ornstein we cited above.

 We saw in the previous section that the measure $\mu^+$ on $\Sigma^+(\mathfs G)$ given by
$$
\mu^+[a_0,\ldots,a_{n-1}]:=\mu({_0[}a_0,\ldots,a_{n-1}])
$$
satisfies $L^\ast\mu^+=\mu^+$ and $L^n 1_{[\un{a}]}\xrightarrow[n\to\infty]{}\mu^+[\un{a}]$, where $L=L_{\phi^\ast}$ and $\phi^\ast:\Sigma^+(\mathfs G)\to\R$ is weakly H\"older continuous. By (\ref{var}),
$$
\sup_{n\geq 1}(\var_{n+m}\phi_n^\ast)\xrightarrow[m\to\infty]{}0.
$$

Fix $0<\d_0<1$ so small that
$1-e^{-t}\in(\frac{1}{2}t,t)$ for all $0<t<\d_0$. Fix some smaller  $0<\delta<\d_0$, to be determined later, and
 choose
\begin{itemize}
\item a finite collection  $S^\ast$ of states (vertices) s.t. $\mu\bigl(\bigcup_{a\in S^\ast}{_0[}a]\bigr)>1-\d$;
\item a constant $C^\ast=C^\ast(S^\ast)>1$ as in Corollary \ref{CorProd};
\item a natural number $m=m(\d)$ s.t. $\sup_{n\geq 1}(\var_{n+m}\phi_n^\ast)<\d$;
\item a finite collection $\gamma$ of $m$--cylinders ${_0}[\un{c}]$ s.t. $\mu(\bigcup\g)>e^{-\d/2(C^\ast)^2}$;
\item points $x(\un{c})\in {_0[\un{c}]}\in\g$;
\item natural numbers $K(\un{c},\un{c}')$ $({[}\un{c}],{[}\un{c}']\in\g)$ s.t. for every $k\geq K(\un{c},\un{c}')$
$$
(L^k 1_{[\un{c}]})(x(\un{c}'))=e^{\pm \d}\mu^+[\un{c}]
$$
(recall that $L^n 1_{[\un{c}]}\xrightarrow[n\to\infty]{}\mu^+[\un{c}]$);
\item $K(\d):=\max\{K(\un{c},\un{c}'):{[\un{c}]},[\un{c}']\in\g\}+m$.
\end{itemize}

\medskip
\noindent
{\em Step 1.\/} Let $A:={_{-n}[}a_0,\ldots,a_{n}]$ and $B:={_k[}b_0,\ldots,b_{n}]$ be two non--empty cylinders of length $n+1$. If $b_0,a_{n}\in S^\ast$, then for every $k>K(\d)$ and every $n\geq 0$,
$$
|\mu(A\cap B)-\mu(A)\mu(B)|<2\sinh(10\d)\mu(A)\mu(B).
$$

\medskip
\noindent
{\em Proof.\/} Let $\a_m$ denote the collection of all $m$--cylinders $_0[\un{c}]$. For every $k>2m$,
\begin{align*}
\mu(A\cap B)&=\sum_{_0[\un{c}],_0[\un{c}'] \in\a_m}\mu(_{-n}[\un{a},\un{c}]\cap\s^{-(k-m)} {_0[}\un{c}',\un{b}])\\
&=\sum_{_0[\un{c}],{_0[}\un{c}']\in\a_m}\mu(_{0}[\un{a},\un{c}]\cap\s^{-(k+n-m)} {_0[}\un{c}',\un{b}])\ \ (\textrm{shift invariance})\\
&=\sum_{_0[\un{c}], {_0[}\un{c}']\in\a_m}\mu^+([\un{a},\un{c}]\cap\s^{-(k+n-m)} {[}\un{c}',\un{b}]).
\end{align*}
By Observation 2  in the proof of corollary \ref{CorProd},
\begin{align*}
\mu(A\cap B)&=\!\!\!\!\!\sum_{{_0[}\un{c}],{_0[}\un{c}']\in\g}\int_{[\un{c}',\un{b}]}(L^{k+n-m}1_{[\un{a},\un{c}]})d\mu^+
+\!\!\!\!\!\!\!\!\underset{{_0[}\un{c}]\not\in\g\textrm{ or }{_0[}\un{c}']\not\in\g}{\sum_{{_0[}\un{c}],{_0[}\un{c}']\in\a_m}}\!\!\int_{[\un{c}',\un{b}]}(L_\phi^{k+n-m}1_{[\un{a},\un{c}]})d\mu^+.
\end{align*}
We call the first sum the ``main term" and the second sum the ``error term".

To estimate these sums we use the following decomposition:
for every $y\in [\un{c}',\un{b}]$,
$(L^{k+n-m}1_{[\un{a},\un{c}]})(y)=\sum_{\s^{k-m-1}z=y}e^{\phi_{n+1}^\ast(\un{a},z)}e^{\phi_{k-m-1}^\ast(z)}1_{[\un{c}]}(z)$.

By the choice of $m$,
$
e^{\phi_{n+1}^\ast(\un{a},z)}=e^{\pm \d}e^{\phi_{n+1}^\ast(\un{a},w)}\textrm{ for all }w,z\in [\un{c}]$. Fixing $z$ and averaging over $w\in [\un{c}]$, we see that
\begin{align}
e^{\phi_{n+1}^\ast(\un{a},z)}&=e^{\pm\d}\left(\frac{1}{\mu^+[\un{c}]}\int_{[\un{c}]}e^{\phi_{n+1}^\ast(\un{a},w)}d\mu^+(w)\right)\notag\\
&=e^{\pm\d}
\left(\frac{1}{\mu^+[\un{c}]}\int(L^{n+1} 1_{[\un{a},\un{c}]})d\mu^+(w)\right)
=e^{\pm\d}\left(
\frac{\mu^+[\un{a},\un{c}]}{\mu^+[\un{c}]}
\right)\notag\\
\therefore\label{onegin}
(L^{n+k-m}1_{[\un{a},\un{c}]})(y)&=e^{\pm\d}\left(\frac{\mu^+[\un{a},\un{c}]}{\mu^+[\un{c}]}\right)(L_\phi^{k-m-1} 1_{[\un{c}]})(y)\textrm{ for $y\in [\un{c}',\un{b}]$.}
\end{align}

\medskip
\noindent
{\em Estimate of the main term\/}:
Suppose $k>K(\delta)$. If ${_0[}\un{c}],{_0[}\un{c}']\in\g$ and $y\in [\un{c}',\un{b}]$, then
\begin{align*}
(L^{k-m-1} 1_{[\un{c}]})(y)&=e^{\pm\d}(L^{k-m-1} 1_{[\un{c}]})(x(\un{c}'))\ \ \ \textrm{by choice of $m$ and since $y,x(\un{c}')\in[\un{c}']$}\\
&=e^{\pm 2\d}\mu^+[\un{c}]\textrm{ by choice of  $K(\d)$}.
\end{align*}
Plugging this into (\ref{onegin}), we see that  if $k>K(\d)$ then $(L_\phi^{n+k-m}1_{[\un{a},\un{c}]})(y)=e^{\pm 3\d}\mu^+[\un{a},\un{c}]$ on $[\un{c}',\un{b}]$. Integrating over $[\un{c}',\un{b}]$,
we see that for all $k>K(\d)$ the main term  equals
$$
e^{\pm 3\d}\sum_{{_0[}\un{c}],{_0[}\un{c}']\in\g}\mu^+[\un{a},\un{c}]\mu^+[\un{c}',\un{b}]=e^{\pm 3\d}\left(\sum_{{_0[}\un{c}]\in\g}\mu^+[\un{a},\un{c}]\right)
\left(\sum_{{_0[}\un{c}']\in\g}\mu^+[\un{c}',\un{b}]\right).
$$
The first bracketed sum is bounded above by $\mu^+[\un{a}]$. To bound it below, we use the  assumption that $a_{n}\in S^\ast$ to write
\begin{align*}
\sum_{{_0[}\un{c}]\in\g}\mu^+[\un{a},\un{c}]&=
\mu^+[\un{a}]-\sum_{{_0[}\un{c}]\in\a_m\setminus\g, [\un{a},\un{c}]\neq\emptyset}\mu^+[\un{a},\un{c}]\\
&=\mu^+[\un{a}]\left(1-
C^\ast\sum_{{_0[}\un{c}]\in\a_m\setminus\g,[\un{a},\un{c}]\neq\emptyset}\mu^+[\un{c}]
\right)\\
&\geq \mu^+[\un{a}]\left(1-
C^\ast\sum_{{_0[}\un{c}]\in\a_m\setminus\g}\mu^+[\un{c}]
\right)\\
&\geq \mu^+[\un{a}]\bigl(1-C^\ast (1-e^{-\d/2(C^\ast)^2})\bigr), \textrm{ by choice of $\g$}\\
&\geq e^{-\d}\mu^+[\un{a}],\textrm{ by choice of $\d_0$.}
\end{align*}
So the first bracketed sum is equal to  $e^{\pm\d}\mu^+[\un{a}]$. Similarly, the second bracketed sum is equal to $e^{\pm\d}\mu^+[\un{b}]$. Thus the main term is
$
e^{\pm 5\d}\mu^+[\un{a}]\mu^+[\un{b}]=e^{\pm 5\d}\mu(A)\mu(B).
$

\medskip
\noindent
{\em Estimate of the error term\/}: Since $a_{n}\in S^\ast$, (\ref{onegin}) implies that
$$
(L^{n+k-m}1_{[\un{a},\un{c}]})(y)\leq C^\ast e^\d \mu^+[\un{a}](L^{k-m-1} 1_{[\un{c}]})(y)\textrm{ on }[\un{c}',\un{b}].
$$
$$
\hspace{-2.6cm}\therefore\textrm{Error term}\leq
C^\ast e^\d \mu^+[\un{a}]\underset{{_0[}\un{c}]\not\in\g\textrm{ or }{_0[}\un{c}']\not\in\g}{\sum_{{_0[}\un{c}],{_0[}\un{c}']\in\a_m}}\int_{[\un{c}',\un{b}]}
(L^{k-m-1} 1_{[\un{c}]})(y)
d\mu^+
$$
\begin{align*}
&=
C^\ast e^\d \mu^+[\un{a}]\underset{{_0[}\un{c}]\not\in\g\textrm{ or }{_0[}\un{c}']\not\in\g}{\sum_{{_0[}\un{c}],{_0[}\un{c}']\in\a_m}}
\mu^+([\un{c}]\cap\s^{-(k-m-1)}[\un{c}',\un{b}])\\
&\leq C^\ast e^\d \mu^+[\un{a}]\biggl(\sum_{{_0[}\un{c}]\in\a_m\setminus\g}\mu^+([\un{c}]\cap\s^{-(k-1)}[\un{b}])+
\sum_{{_0[}\un{c}']\in\a_m\setminus\g}\mu^+(\s^{-(k-m-1)}[\un{c}',\un{b}])
\biggr)\\
&=C^\ast e^\d \mu^+[\un{a}]\biggl(\underset{{_0[}d_0,\ldots,d_{m-1}]\not\in\g}{\sum_{{_0[}\un{d}]\in\a_{k-1}}}\mu^+[\un{d},\un{b}]+
\sum_{{_0[}\un{c}']\in\a_m\setminus\g}\mu^+[\un{c}',\un{b}]
\biggr)\\
&\leq (C^\ast)^2e^\d\mu^+[\un{a}]\biggl(\underset{{_0[}d_0,\ldots,d_{m-1}]\not\in\g}{\sum_{{_0[}\un{d}]\in\a_{k-1}}}\mu^+[\un{d}]\mu^+[\un{b}]+
\sum_{{_0[}\un{c}']\in\a_m\setminus\g}\mu^+[\un{c}']\mu^+[\un{b}]
\biggr)\ \ (\because b_0\in S^\ast)\\
&\leq (C^\ast)^2e^\d\mu^+[\un{a}]\mu^+[\un{b}]\cdot 2\mu[(\cup\g)^c]\leq 2e^\d\d\mu^+[\un{a}]\mu^+[\un{b}]<5\d \mu^+[\un{a}]\mu^+[\un{b}].
\end{align*}
We get that the error term is less than $5\d \mu(A)\mu(B)$.

\medskip
We see that for all $k>K(\d)$,  $\mu(A\cap B)=(e^{\pm 5\d}\pm 5\d)\mu(A)\mu(B)$, whence
$$
|\mu(A\cap B)-\mu(A)\mu(B)|\leq \mu(A)\mu(B)\max\{e^{5\d}+5\d-1, 1-e^{-5\d}+5\d\}.
$$
It follows that $
|\mu(A\cap B)-\mu(A)\mu(B)|\leq 2\sinh(10\d)\mu(A)\mu(B)$.

\medskip
\noindent
{\em Step 2.\/} For every $k>K(\d)$, for every $n\geq 0$,
$$
\sum_{A\in\a_{-n}^0, B\in\a_k^{k+n}}|\mu(A\cap B)-\mu(A)\mu(B)|<2\sinh(10\d)+4\d.
$$

\medskip
\noindent
{\em Proof.\/} Write $A={_{-n}[}a_0,\ldots,a_n]$ and $B={_k[}b_0,\ldots,b_n]$. We break the sum into
\begin{enumerate}
\item the sum over $A,B$ s.t. $a_n, b_0\in S^\ast$;
\item the sum over $A,B$ s.t. $a_n\not\in S^\ast$;
\item the sum over $A,B$ s.t.  $a_n\in S^\ast$ and $b_0\not\in S^\ast$.
\end{enumerate}
The first sum is less than $2\sinh(10\d)$. The second and third sums are bounded by
$
2\mu[(\bigcup_{a\in S^\ast} {_0[}a])^c]<2(1-e^{-\d})<2\d.
$

\medskip
\noindent
{\em Step 3.\/} $\a(\mathfs V')$ has the weak Bernoulli property for every finite $\mathfs V'\subset \mathfs V$.

\medskip
\noindent
{\em Proof.\/} Choose $\d$ so small that $2\sinh(10\d)+4\d<\e$ and take $K=K(\d)$ as above, then
$
\sum_{A\in\a_{-n}^0, B\in\a_k^{k+n}}|\mu(A\cap B)-\mu(A)\mu(B)|<\e\textrm{ for all }n\geq 0.
$
Since the partitions $\a(\mathfs V')_{-n}^0$ and $\a(\mathfs V')_k^{k+n}$ are coarser  than $\a_{-n}^0$ and $\a_k^{k+n}$,  the weak Bernoulli property for $\a(\mathfs V')$  follows  by the triangle inequality.    \hfill$\Box$
\section{Step 3: The Non-Mixing Case}
\begin{lem}[Adler, Shields, and Smorodinsky]\label{LemmaRot}
Let $(X,\mathfs B,\mu,T)$ be an ergodic invertible probability preserving transformation with a measurable set $X_0$ of positive measure such that
\begin{enumerate}
\item $T^p(X_0)=X_0\mod\mu$;
\item $X_0, T(X_0), \ldots,T^{p-1}(X_0)$ are pairwise disjoint $\mod\mu$;
\item $T^p:X_0\to X_0$ equipped with $\mu(\cdot|X_0)$ is a Bernoulli automorphism.
\end{enumerate}
Then $(X,\mathfs B,\mu,T)$ is measure theoretically isomorphic to the product of a Bernoulli scheme and a finite rotation.
\end{lem}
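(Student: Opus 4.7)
The plan is to first exhibit $(X,\mathfs B,\mu,T)$ as a cyclic tower over the Bernoulli base $(X_0, T^p|_{X_0})$ using the ergodicity hypothesis, and then use Ornstein's isomorphism theorem to recast this tower as the direct product of a Bernoulli scheme and the cyclic rotation on $p$ points.

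For the first reduction, set $Y := \bigcup_{i=0}^{p-1} T^i(X_0)$. Hypothesis (1) gives $T(Y)=Y \mod \mu$, so ergodicity forces $\mu(Y)=1$; together with (2) this yields $\mu(T^i(X_0))=1/p$ for each $i$. Writing $\nu := \mu(\,\cdot\mid X_0)$, the map $T^i x \mapsto (x,i)$ (for $x\in X_0$ and $0\le i<p$) identifies $(X,\mu,T)$ with the skew product $(X_0\times\Z/p\Z,\,\nu\times\mathrm{Haar},\,\widetilde T)$, where $\widetilde T(x,i)=(x,i+1)$ for $i<p-1$ and $\widetilde T(x,p-1)=(T^p x,0)$. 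Let $S:=T^p|_{X_0}$, which is Bernoulli by (3). Choose a Bernoulli shift $B$ on some space $(W,\mu_W)$ with $h(B)=h(S)/p$. Since the $p$-th iterate of a Bernoulli shift is again Bernoulli (the shift on $p$-blocks) with entropy $p\cdot h(B)=h(S)$, Ornstein's isomorphism theorem supplies a measure preserving conjugacy $\phi:(X_0,\nu,S)\to (W,\mu_W,B^p)$.

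Finally, define $\Phi:X_0\times\Z/p\Z\to W\times\Z/p\Z$ by $\Phi(x,i):=(B^i\phi(x),i)$. Since $B$ preserves $\mu_W$ and $\phi_\ast\nu=\mu_W$, the map $\Phi$ pushes $\nu\times\mathrm{Haar}$ to $\mu_W\times\mathrm{Haar}$, and it is bijective with inverse $(w,i)\mapsto(\phi^{-1}(B^{-i}w),i)$. A direct check gives $\Phi\circ\widetilde T=(B\times R_p)\circ\Phi$, where $R_p(i)=i+1\mod p$: for $i<p-1$ both sides equal $(B^{i+1}\phi(x),i+1)$, and for $i=p-1$ the equality $\Phi(T^p x,0)=(\phi(Sx),0)=(B^p\phi(x),0)=(B\times R_p)(B^{p-1}\phi(x),p-1)$ uses $\phi\circ S=B^p\circ\phi$. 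Composing with the tower identification of the first step yields the desired isomorphism of $(X,\mu,T)$ with $(W\times\Z/p\Z,\,\mu_W\times\mathrm{Haar},\,B\times R_p)$. The only substantive input is Ornstein's theorem, used to extract a Bernoulli $p$-th root of $S$; everything else is a routine verification of a skew-product identity.
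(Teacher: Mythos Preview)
Your proof is correct and follows essentially the same route as the paper's: both arguments use ergodicity to view $(X,\mu,T)$ as a height-$p$ cyclic tower over $(X_0,\nu,T^p)$, invoke Ornstein's theorem to conjugate $T^p|_{X_0}$ to the $p$-th power of a Bernoulli shift $B$ of entropy $h(S)/p=h_\mu(T)$, and then build the product isomorphism via the formula $(x,i)\mapsto(B^i\phi(x),i)$. The only cosmetic difference is that the paper defines the isomorphism $\Pi$ directly on $X$ rather than first passing through the explicit skew-product model.
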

\noindent
{\em Proof (see \cite{ASS})}.
Let $X_i:=T^i(X)$ ($i=0,\ldots,p-1$).
 Since $T$ is ergodic and measure preserving, $\mu(X_i)=\frac{1}{p}$ for all $p$. Also,  $T^p(X_i)=X_i\mod\mu$ for all $i$.
Since $T$ is invertible, $T^p:X_i\to X_i$ equipped with $\mu_i:=\mu(\cdot|X_i)$ is  isomorphic to $T^p:X_0\to X_0$. It follows that $h_{\mu_i}(T^p)$ are all equal. Since $\mu=\frac{1}{p}(\mu_0+\cdots+\mu_{p-1})$ and since $\mu\mapsto h_\mu(T^p)$ is affine, $h_{\mu_i}(T^p|_{X_i})=h_\mu(T^p)=p h_\mu(T)$ for every $i$.

Let $(\Sigma,\mathfs F,m,S)$ denote a Bernoulli scheme s.t.
$h_m(S)=h_\mu(T)$. The map $S^p:\Sigma\to\Sigma$ is isomorphic to a Bernoulli scheme with entropy $p h_\mu(T)$. It follows that $S^p$ is isomorphic to $T^p:X_0\to X_0$. Let $\vartheta:X_0\to\Sigma$ be an isomorphism map: $\vartheta\circ T^p=S^p\circ\vartheta$.
Define:
\begin{itemize}
\item $F_p:=\{0,1,\ldots,p-1\}$
\item $R:F_p\to F_p$, $R(x)=x+1\  (\mod p\,)$
\item $\Pi:X\to \Sigma\x F_p$, $\Pi(x)=(S^i[\vartheta(y)],i)$ for the unique $(y,i)\in X_0\x F_p$ s.t. $x=T^i(y)$ (this makes sense  on a set of full measure).
\end{itemize}

$\Pi$ is an isomorphism from $(X,\mathfs B,\mu,T)$ to $(\Sigma\x F_p, \mathfs F\otimes 2^{F_0},m\times c,S\x R)$, where $c$ is $\frac{1}{p}\times$the counting measure on $F_p$:
\begin{enumerate}
\item $\Pi$ is invertible: The  inverse function is $(z,i)\mapsto T^i(\vartheta^{-1}[S^{-i}(z)])$.
\item $\Pi\circ T=(S\x R)\circ\Pi$: Suppose $x\in X$, and write $x=T^i(y)$ with $(y,i)\in X_0\x F_p$. If $i<p-1$, then $T(x)=T^{i+1}(y)$ with $(y,i+1)\in X_0\x F_p$, so
$
\Pi[T(x)]=(S^{i+1}[\vartheta(y)],i+1)=(S\x R)(S^i[\vartheta(y)], i))=(S\x R)[\Pi(x)].
$
If $i=p-1$, then $T(x)=T[T^{p-1}(y)]$ and $(T^p(y),0)\in X_0\x F_p$. Since $\vartheta\circ T^p=S^p\circ\vartheta$ on $X_0$,
$
\Pi[T(x)]=(\vartheta(T^p y),0)=(S^p[\vartheta(y)],R(p-1))=(S\x R)(S^i[\vartheta(y)],i)=(S\x R)[\Pi(x)].
$
In all cases, $\Pi\circ T=(S\x R)\circ\Pi$.
\item $\mu\circ \Pi^{-1}=m\x c$: For every Borel set $E\subset \Sigma$ and $i\in F_p$,
\begin{multline*}
(\mu\circ \Pi^{-1})(E\x\{i\})=\mu[\vartheta^{-1} S^{-i}(E)]=\mu(X_0)\mu(\vartheta^{-1} S^{-i}(E)|X_0)\\
= \mu(X_0)(\mu_0\circ\vartheta^{-1})(S^{-i}E)=\frac{1}{p} m(S^{-i} E)=\frac{1}{p} m(E)
=(m\x c)(E\x\{i\}).
\end{multline*}
\end{enumerate}
It follows that $\Pi$ is a measure theoretic isomorphism.\hfill$\Box$

\subsection*{Proof of Theorem \ref{ThmMain}} Suppose $\mu$ is an equilibrium measure with positive entropy for $f$ and the H\"older potential $\Psi:M\to\R$. Fix some $0<\chi<h_\mu(f)$. By Theorems \ref{Theorem_Main_Extension} and \ref{Theorem_Main_Lift}, there exists a  countable Markov shift $\s:\Sigma\to\Sigma$,  a H\"older continuous map $\pi:\Sigma\to M$, and  a shift invariant ergodic probability measure $\wh{\mu}$ on $\Sigma$ s.t. $\wh{\mu}\circ\pi^{-1}=\mu$ and $h_{\wh{\mu}}(\s)=h_\mu(f)$.
In particular, if $\psi:=\Psi\circ\pi$, then
$
h_{\wh{\mu}}(\s)+\int\psi d\wh{\mu}=h_{{\mu}}(f)+\int{\Psi}d{\mu}.
$

For any other ergodic shift invariant probability measure $\wh{m}$, there is a set of full measure $\wh{\Sigma}\subset \Sigma$ s.t. $\pi:\wh{\Sigma}\to M$ is finite-to-one (Theorem \ref{Theorem_Main_Finite_To_One}). Therefore the $f$--invariant measure $m:=\wh{m}\circ\pi^{-1}$  has the same entropy as $\wh{m}$, whence
$$
h_{\wh{m}}(\s)+\int\psi d\wh{m}=h_{{m}}(f)+\int\Psi d{m}\leq h_{{\mu}}(f)+\int{\Psi}d{\mu}=h_{\wh{\mu}}(\s)+\int\psi d\wh{\mu}.
$$
It follows that $\wh{\mu}$ is an equilibrium measure for $\s:\Sigma\to\Sigma$ and $\psi$.

We wish to apply Theorem \ref{ThmSymbolic}. The potential $\psi$  is H\"older continuous, bounded, and $P_G(\psi)=h_\mu(f)+\int\Psi d\mu<\infty$.  But $\s:\Sigma\to\Sigma$ may  not be  topologically mixing.
To deal with this difficulty we appeal to the spectral decomposition theorem.

Since $\wh{\mu}$ is ergodic, it is carried by a topologically transitive $\Sigma'=\Sigma(\mathfs G')$ where $\mathfs G'$ is a subgraph of $\mathfs G$. Let $p$ denote the period of $\Sigma'$ (see \S \ref{SymbolicSection}).
The Spectral Decomposition Theorem for CMS  \cite[Remark 7.1.35]{Ki} states that
$$
\Sigma'=\Sigma'_0\uplus\Sigma_1'\uplus\cdots\uplus \Sigma_{p-1}'
$$
where every $\Sigma'_i$ is a union of states of $\Sigma$,  $\s(\Sigma_i')=\Sigma_{(i+1)\mathrm{mod} p}'$, and $\s^p:\Sigma_i'\to\Sigma_i'$ is  topologically mixing. Each $\s^p:\Sigma_i'\to\Sigma_i'$ is topologically conjugate to the CMS $\Sigma(\mathfs G'_i)$ where $\mathfs G'_i$ is the directed graph with
\begin{itemize}
\item vertices
$
(v_0,v_1,\ldots,v_{p-1})$
where $v_0\to\cdots\to v_{p-1}$ is a path in $\mathfs G'$ which starts at one of the states in $\Sigma'_i$, 
\item and edges
$
(v_0,\ldots,v_{p-1})\to(w_0,\ldots,w_{p-1})\textrm{ iff }v_{p-1}= w_0$.
\end{itemize}

Let $\wh{\mu}_i:=\wh{\mu}(\cdot|\Sigma_i')$. It is not difficult to see that $\wh{\mu}_i$ is an equilibrium measure for $\s^p:\Sigma_i'\to\Sigma_i'$ with respect to the potential $\psi_p:=\psi+\psi\circ\s+\cdots+\psi\circ\s^{p-1}$. It is also not difficult to see that $\psi_p$ can be identified with a bounded H\"older continuous potential ${\psi}_p^i$ on $\Sigma(\mathfs G'_i)$ and that $P_G(\psi_p^i)=pP_G(\psi)<\infty$.

By Theorem \ref{ThmSymbolic}, $\s^p:\Sigma_i'\to\Sigma_i'$ equipped with $\wh{\mu}_i$ is isomorphic to a Bernoulli scheme. 

Let $X_i:=\pi(\Sigma_i')$. Since $\pi\circ\s=f\circ\pi$, $f(X_i)=X_{(i+1)\mathrm{mod\ }p}$. Each $X_i$ is $f^p$--invariant, and $f^p:X_i\to X_i$ equipped with $\mu_i:=\mu(\cdot|X_i)$ is a factor of $\s^p:\Sigma_i'\to\Sigma_i'$. By Ornstein's Theorem \cite{O1}, factors of Bernoulli automorphisms are Bernoulli automorphisms. So $f^p:X_i\to X_i$ are Bernoulli automorphisms.

In particular, $f^p:X_i\to X_i$ are ergodic. Since $X_i\cap X_j$ is $f^p$--invariant, either $X_i=X_j$ or $X_i\cap X_j=\emptyset\mod \mu$. So there exists $q|p$ s.t.
$
M=X_0\uplus\cdots\uplus X_{q-1}\mod\mu.
$
Since $q|p$, $f(X_i)=X_{(i+1)\mathrm{mod\ }q}$, and 
 $f^q:X_0\to X_0$ is a root of $f^p:X_0\to X_0$. Since $f^p$ is Bernoulli, $f^q$ is Bernoulli  \cite{O3}. By Lemma \ref{LemmaRot},   $(M,\mathfs B(M),\mu,f)$ is isomorphic to the product of a  Bernoulli scheme and a finite rotation.\hfill$\Box$

\section{Concluding remarks}
We discuss some additional consequences of the proof we presented in the previous sections. In what follows $f:M\to M$ is a $C^{1+\a}$ surface diffeomorphism on a compact smooth orientable surface. We assume throughout that the topological entropy of $f$ is positive.

\subsection{The measure of maximal entropy is virtually Markov}\label{MaxSubSection}
Equilibrium measures for $\Psi\equiv 0$ are called {\em measures of maximal entropy} for obvious reasons.

A famous theorem of Adler \& Weiss \cite{AW} says that an ergodic measure of maximal entropy $\mu_{\max}$ for a hyperbolic toral automorphism  $f:\T^2\to \T^2$ can be coded as  finite state Markov chain.
More precisely, there exists a subshift of finite type $\s:\Sigma\to\Sigma$ and a H\"older continuous map $\pi:\Sigma\to\T^2$   such that (a) $\pi\circ\s=f\circ\pi$; (b)  $\mu_{\max}=\wh{\mu}_{\max}\circ\pi^{-1}$ where $\wh{\mu}_{\max}$ is an ergodic Markov measure on $\Sigma$; and (c) $\pi$ is a measure theoretic isomorphism.

 This was  extended by Bowen \cite{BLNM} to all Axiom A diffeomorphisms, using  Parry's characterization of the measure of maximal entropy for a subshift of finite type \cite{Pa}. Bowen's result holds in any dimension.

In dimension two, we have the following  generalization to  general $C^{1+\a}$ surface  diffeomorphisms with positive topological entropy:

\begin{theorem}\label{ThmMarkov}
Suppose $\mu_{\max}$ is an ergodic measure of maximal entropy for $f$, then there exists a topologically transitive CMS $\s:\Sigma\to\Sigma$ and a H\"older continuous map $\pi:\Sigma\to M$ s.t. {\em (a)} $\pi\circ\s=f\circ\pi$; {\em (b)} $\mu_{\max}=\wh{\mu}_{\max}\circ\pi^{-1}$ where $\wh{\mu}_{\max}$ is an ergodic  Markov measure on $\Sigma$; and {\em (c)} $\exists\Sigma'\subset\Sigma$ of full measure s.t.  $\pi|_{\Sigma'}$ is $n$--to--one.
\end{theorem}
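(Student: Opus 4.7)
The plan is to lift $\mu_{\max}$ to the symbolic extension of Theorems \ref{Theorem_Main_Extension}--\ref{Theorem_Main_Lift} and then identify the lifted measure as a Gurevich--Parry Markov measure. Fix $\chi\in(0,h_{top}(f))$ and invoke Theorems \ref{Theorem_Main_Extension} and \ref{Theorem_Main_Lift} to produce a locally compact CMS $\Sigma_\chi$, a H\"older continuous factor map $\pi_\chi:\Sigma_\chi\to M$ with $\pi_\chi\circ\s=f\circ\pi_\chi$, and an ergodic shift-invariant probability $\wh\mu_{\max}$ on $\Sigma_\chi$ such that $\wh\mu_{\max}\circ\pi_\chi^{-1}=\mu_{\max}$ and $h_{\wh\mu_{\max}}(\s)=h_{\mu_{\max}}(f)=h_{top}(f)$. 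Ergodicity concentrates $\wh\mu_{\max}$ on $\Sigma:=\Sigma(\mathfs G')$ for some topologically transitive subgraph $\mathfs G'$. Because every shift-invariant measure gives $\Sigma_\chi^\#$ full measure (Poincar\'e) and $\pi_\chi$ is finite-to-one on $\Sigma_\chi^\#$ by Theorem \ref{Theorem_Main_Finite_To_One}, entropy is preserved by $\pi_\chi$ on $\Sigma$, so $h_m(\s)\leq h_{top}(f)$ for every ergodic shift-invariant $m$ on $\Sigma$. Thus the Gurevich entropy $h_{\mathrm{Gur}}(\Sigma)=h_{top}(f)<\infty$ and $\wh\mu_{\max}$ is a measure of maximal entropy on $\Sigma$.

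By the Gurevich--Parry theorem on measures of maximal entropy for topologically transitive CMS of finite Gurevich entropy (equivalently, by the Buzzi--Sarig theorem of Section~3 applied to $\phi\equiv 0$), the MME is unique when it exists and is a Markov measure: if $\ell,r$ are positive left and right eigenvectors of the adjacency matrix of $\mathfs G'$ with eigenvalue $\l=e^{h_{top}(f)}$, then $\wh\mu_{\max}$ is the two-sided natural extension of the Markov chain with transition probabilities $p(a,b)=\l^{-1}r(b)/r(a)$ on edges $a\to b$ and stationary distribution proportional to $\ell(a)r(a)$; equivalently $\wh\mu_{\max}=h\,d\nu$ in the Buzzi--Sarig formulation, with $h(x)=\ell(x_0)$ and $\nu[a_0,\ldots,a_{n-1}]=\l^{-(n-1)}r(a_{n-1})$. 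If $\s\colon\Sigma\to\Sigma$ is only topologically transitive but not mixing, use the spectral decomposition argument from the end of the proof of Theorem \ref{ThmMain} to reduce to the mixing case on each periodic component of $\s^p$ and then reassemble. This establishes parts (a) and (b) with $\pi:=\pi_\chi|_\Sigma$.

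For (c), Theorem \ref{Theorem_Main_Finite_To_One} gives that $|\pi_\chi^{-1}(x)|$ is finite on the $\mu_{\max}$--full measure set $\pi_\chi[\Sigma_\chi^\#]$. The shift $\s$ induces a bijection $\pi_\chi^{-1}(x)\leftrightarrow\pi_\chi^{-1}(f(x))$, so the $\N$--valued function $x\mapsto|\pi_\chi^{-1}(x)|$ is $f$-invariant and therefore a.s.\ equal to a constant $n$ by ergodicity of $\mu_{\max}$. Taking $\Sigma':=\Sigma\cap\Sigma_\chi^\#\cap\pi_\chi^{-1}\{x:|\pi_\chi^{-1}(x)|=n\}$ produces a $\wh\mu_{\max}$--full measure $\s$-invariant set on which $\pi_\chi$ is exactly $n$--to--one. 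I expect Step~3 to be the main obstacle: one has to verify that the Buzzi--Sarig eigenfunction for the zero potential may be chosen to depend only on $x_0$ (so that $h\,d\nu$ is literally the classical Gurevich--Parry Markov measure), and to push the spectral decomposition through when the irreducible component is only transitive.
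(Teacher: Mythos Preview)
Your proof is correct and follows the same route as the paper. The obstacles you anticipate are not real: Gurevich's theorem \cite{G} applies directly to topologically \emph{transitive} CMS of finite entropy and already asserts that the measure of maximal entropy is a Markov measure, so no spectral decomposition and no separate verification that the eigenfunction depends only on $x_0$ is needed---the paper simply cites Gurevich and is done.
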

\begin{proof}
The arguments in the previous section show that $\mu_{\max}=\wh{\mu}_{\max}\circ\pi^{-1}$ where $\wh{\mu}_{\max}$ is an ergodic measure of maximal entropy on some topologically transitive countable Markov shift $\Sigma(\mathfs G)$ and $\pi:\Sigma(\mathfs G)\to M$ is H\"older continuous map s.t. $\pi\circ\s=f\circ\pi$ and such that $\pi$ is finite-to-one on a set of full $\wh{\mu}_{\max}$--measure. Since $x\mapsto |\pi^{-1}(x)|$ is $f$--invariant, $\pi$ is $n$--to--one on a set of full measure for some $n\in\N$.

Gurevich's Theorem \cite{G} says that $\wh{\mu}_{\max}$ is  a Markov measure. Ergodicity forces the support of $\wh{\mu}_{\max}$ to be a  topologically transitive sub--CMS of $\Sigma(\mathfs G)$.
\end{proof}
\noindent
The example mentioned in the introduction shows that the theorem is false in dimension larger than two.

\subsection{Equilibrium measures for $-t\log J_u$}\label{SectionJ_u}
Theorem \ref{ThmMain} was stated for equilibrium measures $\mu$ of  H\"older continuous functions $\Psi:M\to\R$, but the proof  works equally well for any function $\Psi$ s.t. $\psi:=\Psi\circ\pi_\chi$ is a bounded H\"older continuous function on $\Sigma_\chi$. Here $\chi$ is any positive number strictly smaller than $h_\mu(f)$, and  $\pi_\chi:\Sigma_\chi\to M$ is the Markov extension described in \S\ref{SymbolicSection}.

We discuss a particular example which  appears naturally in hyperbolic dynamics (see e.g. \cite{BP}, \cite{Ledrappier},\cite{Bowen}).

 Let $M'$ denote the set of $x\in M$ s.t.  $T_x M$ splits into the direct sum of two one--dimensional spaces
$
E^s(x)$ and $E^u(x)$
so that $\limsup\limits_{n\to\infty}\frac{1}{n}\log\|df^n_x\un{v}\|_{f^n(x)}<0$ for all $\un{v}\in E^s(x)\setminus\{\un{0}\}$, and $\limsup\limits_{n\to\infty}\frac{1}{n}\log\|df^{-n}_x\un{v}\|_{f^{-n}(x)}<0$ for all $\un{v}\in E^u(x)\setminus\{\un{0}\}$.  It is well--known that if the spaces $E^s(x)$, $E^u(x)$ exist, then they are unique, and $df_x[E^u(x)]=E^u(f(x))$, $df_x[E^s(x)]=E^s(f(x))$.

\begin{definition}
The {\em unstable Jacobian} is
$
J_u(x):=|\det(df_x|_{E^u(x)})|
$  $(x\in M')$.
\end{definition}
\noindent
Equivalently, $J_u(x)$ is the unique positive number s.t. $\|df_x(\un{v})\|_{f(x)}=J_u(x)\|\un{v}\|_x$ for all $\un{v}\in E^u(x)$.

Notice that $J_u(x)$ is only defined on $M'$.
Oseledets' Theorem and Ruelle's Entropy Inequality guarantee that $\mu(M\setminus M')=0$ for every $f$--ergodic invariant measure with positive entropy.

The maps $x\mapsto E^u(x)$, $x\mapsto E^s(x)$ are in general  not smooth. Brin's Theorem states that these maps are H\"older continuous on Pesin sets \cite[\S5.3]{BP}. Therefore $J_u(x)$ is H\"older continuous on Pesin sets. We have no reason to  expect  $J_u(x)$ to extend to a  H\"older continuous function  on $M$.

Luckily, the following holds \cite[Proposition 12.2.1]{S}: For the Markov extension $\pi_\chi:\Sigma_\chi\to M$,  $E^u(\pi(\un{u})), E^s(\pi(\un{u}))$ are well--defined for every $\un{u}\in\Sigma$, and the maps
 $\un{u}\mapsto E^u(\un{u}), \un{u}\mapsto E^s(\un{u})$ are H\"older continuous on $\Sigma_\chi$.  As a result
$J_u\circ\pi$ is a globally defined bounded H\"older continuous function on $\Sigma_\chi$.

Since $f$ is a diffeomorphism, $\log (J_u\circ\pi)$ is also globally defined,  bounded and H\"older continuous.

\begin{theorem}
Suppose $\mu$ maximizes
$
h_\mu(f)-t\int(\log J_u) d\mu
$
among all ergodic invariant probability measures carried by $M'$. If
$h_\mu(f)>0$, then $f$ is measure theoretically isomorphic w.r.t. $\mu$ a Bernoulli scheme times  a finite rotation.
\end{theorem}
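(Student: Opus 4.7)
The plan is to reduce the proof to Theorem \ref{ThmMain} by lifting the problem to the Markov extension $\pi_\chi:\Sigma_\chi\to M$ with the potential $\psi:=-t\log(J_u\circ\pi_\chi)$. Fix $0<\chi<h_\mu(f)$ and let $\wh{\mu}$ be the ergodic lift of $\mu$ to $\Sigma_\chi$ furnished by Theorem \ref{Theorem_Main_Lift}, so $\mu=\wh{\mu}\circ\pi_\chi^{-1}$ and $h_{\wh{\mu}}(\s)=h_\mu(f)$. As recorded in the paragraph immediately preceding the theorem, $\log(J_u\circ\pi_\chi)$ is globally defined, bounded and H\"older continuous on $\Sigma_\chi$. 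Consequently $\psi$ is bounded and H\"older continuous, with $P_G(\psi)\leq h_{top}(f)+t\sup|\log(J_u\circ\pi_\chi)|<\infty$.

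The first genuine step is to verify that $\wh{\mu}$ is an equilibrium measure for $\psi$ on $\Sigma_\chi$, since the Bernoulli machinery of \S\ref{BernoulliSection} is stated for equilibrium measures on the symbolic side. Let $\wh{m}$ be any ergodic $\s$-invariant probability measure on $\Sigma_\chi$ and set $m:=\wh{m}\circ\pi_\chi^{-1}$. By Poincar\'e recurrence $\wh{m}(\Sigma_\chi^\#)=1$, and by Theorem \ref{Theorem_Main_Finite_To_One} $\pi_\chi$ is finite-to-one on $\Sigma_\chi^\#$; as in the proof of Theorem \ref{ThmMain}, this gives $h_m(f)=h_{\wh{m}}(\s)$ and preserves ergodicity. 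Moreover, by Proposition 12.2.1 of \cite{S} cited above, $E^s(\pi_\chi(\un{u}))$ and $E^u(\pi_\chi(\un{u}))$ are defined for every $\un{u}\in\Sigma_\chi$, so $\pi_\chi(\Sigma_\chi)\subset M'$ and $m$ is carried by $M'$. The maximization hypothesis on $\mu$ therefore yields
\[
h_{\wh{m}}(\s)+\int\psi\, d\wh{m}=h_m(f)-t\int\log J_u\, dm\leq h_\mu(f)-t\int\log J_u\, d\mu=h_{\wh{\mu}}(\s)+\int\psi\, d\wh{\mu},
\]
so $\wh{\mu}$ is indeed an equilibrium measure for $\psi$.

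Once this is in place, the remainder of the argument is exactly the proof of Theorem \ref{ThmMain} applied to $(\Sigma_\chi,\s,\wh{\mu},\psi)$: spectrally decompose the topologically transitive sub-CMS $\Sigma'$ carrying $\wh{\mu}$ into period-$p$ topologically mixing pieces $\Sigma'_0,\ldots,\Sigma'_{p-1}$, apply Theorem \ref{ThmSymbolic} on each piece to $(\s^p,\wh{\mu}(\cdot|\Sigma'_i))$ with potential $\psi_p:=\psi+\psi\circ\s+\cdots+\psi\circ\s^{p-1}$ to deduce that each restriction is Bernoulli, push the Bernoulli property down to $M$ through $\pi_\chi$ using Ornstein's factor theorem \cite{O1}, and finally assemble the pieces via Lemma \ref{LemmaRot} (after passing to the common period $q\mid p$ of the $f$-orbit of the atoms) to obtain the isomorphism with a Bernoulli scheme times a finite rotation. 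The principal obstacle is the equilibrium verification in the middle paragraph: the hypothesis only supplies maximality over measures carried by $M'$, but we must compare with arbitrary invariant measures on $\Sigma_\chi$. This is precisely what the inclusion $\pi_\chi(\Sigma_\chi)\subset M'$ coming from the construction of the Markov extension buys us.
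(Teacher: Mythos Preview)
Your proposal is correct and follows exactly the approach the paper intends: the paper does not give a separate proof of this theorem but simply remarks that the proof of Theorem \ref{ThmMain} works for any $\Psi$ with $\Psi\circ\pi_\chi$ bounded H\"older on $\Sigma_\chi$, and then observes (via \cite[Proposition 12.2.1]{S}) that $-t\log(J_u\circ\pi_\chi)$ has this property. You have spelled out the details faithfully, including the point the paper leaves implicit---that $\pi_\chi(\Sigma_\chi)\subset M'$, so pushforwards of shift-invariant measures are carried by $M'$ and the maximality hypothesis applies to them. One cosmetic remark: in your pressure bound replace $t$ by $|t|$, and note that $h_{\wh m}(\s)\leq h_{top}(f)$ uses the finite-to-one property of $\pi_\chi$ rather than a direct bound by $h_{top}(\s)$.
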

\noindent
The case $t=1$ follows from the work of  Ledrappier \cite{Ledrappier}, see also Pesin \cite{Pesin}.

\subsection{How many ergodic equilibrium measures with positive entropy?}

\begin{theorem}\label{ErgodicComponentsThm}
A H\"older continuous potential on $M$ has at most countably many ergodic equilibrium measures with positive entropy.
\end{theorem}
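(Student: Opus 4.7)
The plan is to use the symbolic coding of Theorems \ref{Theorem_Main_Extension}--\ref{Theorem_Main_Lift} to transfer the problem to countable Markov shifts, and then to combine Corollary \ref{CorBS} with spectral decomposition and the countability of the vertex set of $\Sigma_\chi$.

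For each $n\in\N$ set $\chi_n:=1/n$ and consider the collection of ergodic equilibrium measures $\mu$ for $\Psi$ with $h_\mu(f)>\chi_n$. By Theorem \ref{Theorem_Main_Lift} each such $\mu$ is the projection of some ergodic $\s$--invariant probability measure $\wh\mu$ on $\Sigma_{\chi_n}$ with $h_{\wh\mu}(\s)=h_\mu(f)$. Arguing as in the proof of Theorem \ref{ThmMain} (using Theorem \ref{Theorem_Main_Finite_To_One} to transfer entropies between any other ergodic lift and its projection), one checks that $\wh\mu$ is an ergodic equilibrium measure for the bounded H\"older continuous potential $\psi:=\Psi\circ\pi_{\chi_n}$, whose Gurevich pressure satisfies $P_G(\psi)\leq h_{top}(f)+\max|\Psi|<\infty$. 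Since $\wh\mu\mapsto\wh\mu\circ\pi_{\chi_n}^{-1}=\mu$ is injective on lifts, it suffices to bound the number of ergodic equilibrium measures for $\psi$ on $\Sigma_{\chi_n}$.

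Any such $\wh\mu$ is supported on a single maximal topologically transitive sub--CMS $\Sigma(\mathfs G')\subset\Sigma_{\chi_n}$, on which it remains an equilibrium measure for $\psi|_{\Sigma(\mathfs G')}$ (because the invariant measures on $\Sigma(\mathfs G')$ are a subset of those on $\Sigma_{\chi_n}$ and $\wh\mu$ is concentrated on $\Sigma(\mathfs G')$). Since the vertex set of $\Sigma_{\chi_n}$ is countable, at most countably many such maximal irreducible components arise. On each component of period $p$, I would invoke the spectral decomposition step from the proof of Theorem \ref{ThmMain}: $\s^p$ is topologically mixing on each of the $p$ pieces $\Sigma_i'$, the telescoped potential $\psi_p:=\psi+\psi\circ\s+\cdots+\psi\circ\s^{p-1}$ is bounded H\"older continuous with $P_G(\psi_p)=pP_G(\psi)<\infty$, so Corollary \ref{CorBS} provides at most one equilibrium measure for $\psi_p$ on each $\Sigma_i'$. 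Re--assembling gives at most one ergodic equilibrium measure for $\psi$ per topologically transitive component, hence at most countably many on $\Sigma_{\chi_n}$. Taking the countable union over $n$ then finishes the proof.

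The only substantive point is uniqueness on a topologically transitive (rather than mixing) component; this is handled by precisely the spectral decomposition argument already used in the proof of Theorem \ref{ThmMain}, and everything else is bookkeeping through the symbolic extension.
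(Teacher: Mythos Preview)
Your argument is correct and follows the same overall strategy as the paper: lift to $\Sigma_\chi$, reduce to counting ergodic equilibrium measures of $\psi=\Psi\circ\pi_\chi$, and use uniqueness on transitive sub-CMSs together with countability of the vertex set. Two small remarks.

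First, the sentence ``Since $\wh\mu\mapsto\wh\mu\circ\pi_{\chi_n}^{-1}=\mu$ is injective on lifts'' is stated in the wrong direction. Injectivity of the projection is neither needed nor obvious; what you actually use (and what the paper uses) is that every ergodic equilibrium measure $\mu$ with $h_\mu(f)>\chi_n$ \emph{has} a lift $\wh\mu$ which is an ergodic equilibrium measure for $\psi$. That gives a surjection from the set of such $\wh\mu$ onto the set of such $\mu$, so a bound on the former bounds the latter.

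Second, the counting step is organized slightly differently in the paper. You pass to \emph{maximal} irreducible components (strongly connected components), observe that these partition the countable vertex set, and then invoke uniqueness on each via spectral decomposition and Corollary~\ref{CorBS}. The paper instead works with the (not necessarily maximal) transitive subgraph $\mathfs H$ actually supporting a given ergodic equilibrium measure, and shows by contradiction that two such subgraphs $\mathfs H_1,\mathfs H_2$ must have disjoint vertex sets: if they shared a vertex, $\mathfs H_1\cup\mathfs H_2$ would be transitive and would carry two distinct equilibrium measures (with supports $\Sigma(\mathfs H_1)$ and $\Sigma(\mathfs H_2)$), contradicting uniqueness. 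Your route via maximal components is arguably more direct; the paper's route has the advantage of not needing to argue that an ergodic measure is supported on a single SCC.
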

\begin{proof}
Fix $\Psi:M\to\R$ H\"older continuous (more generally a function such that $\psi$ defined below is H\"older continuous).

Given  $0<\chi<h_{top}(f)$, we show that $\Psi$ has at most countably many ergodic equilibrium measures $\mu$ s.t.  $h_\mu(f)>\chi$.

Let $\pi_\chi:\Sigma_\chi\to M$ denote the Markov extension described in \S\ref{SymbolicSection}, and let $\mathfs G$ denote the directed graph s.t. $\Sigma_\chi=\Sigma(\mathfs G)$.  We saw in the proof of Theorem \ref{ThmMain} that every ergodic equilibrium measure $\mu$ for $\Psi$ s.t. $h_\mu(f)>\chi$ is the projection of some ergodic equilibrium measure for $\psi:=\Psi\circ\pi_\chi:\Sigma(\mathfs G)\to\R$. So it is enough to show that $\psi$ has at most countably many ergodic equilibrium measures.

Every ergodic equilibrium measure $\mu$ on $\Sigma(\mathfs G)$ is carried by $\Sigma(\mathfs H)$ where (i) $\mathfs H$ is a subgraph of $\mathfs G$, (ii) $\s:\Sigma(\mathfs H)\to\Sigma(\mathfs H)$ is  topologically transitive, and (iii) $\Sigma(\mathfs H)$ carries  an equilibrium measure for $\psi:\Sigma(\mathfs G)\to\R$. Simply take the subgraph with vertices $a$ s.t. $\mu(_0[a])\neq 0$ and edges $a\to b$ s.t. $\mu(_0[a,b])\neq 0$.

For every subgraph $\mathfs H$ satisfying (i),(ii), and (iii) there is exactly one  equilibrium measure for $\psi$ on $\Sigma(\mathfs H)$. The support of this measure is $\Sigma(\mathfs H)$, see  Corollary \ref{CorBS} and Theorem \ref{ThmSymbolic}.

So every ergodic equilibrium measure sits on $\Sigma(\mathfs H)$ where $\mathfs H$ satisfies (i), (ii), and (iii), and every such $\Sigma(\mathfs H)$  carries exactly one  measure like that.
As a result, it is enough to show that $\mathfs G$ contains at most countably many subgraphs $\mathfs H$ satisfying (i), (ii), and (iii).

  We do this by showing that any two different subgraphs $\mathfs H_1$, $\mathfs H_2$ like that have disjoint sets of vertices.
 Assume by contradiction  that  $\mathfs H_1, \mathfs H_2$ share a vertex.  Then $\mathfs H:=\mathfs H_1\cup\mathfs H_2$ satisfies (i), (ii), and (iii). By the discussion above,  $\Sigma(\mathfs H)$  carries at most one equilibrium measure for $\psi$. But it carries at least two such measures: one with support $\Sigma(\mathfs H_1)$ and one with support $\Sigma(\mathfs H_2)$. This contradiction shows  that $\mathfs H_1$ and $\mathfs H_2$ cannot have common vertices.
\end{proof}

The case $\Psi=-\log J_u$ is due to Ledrappier \cite{Ledrappier} and Pesin \cite{Pesin}.  The case $\Psi\equiv 0$ was done at \cite{S}. Buzzi \cite{Buzzi} had shown that the measure of maximal entropy of a piecewise affine surface homeomorphism has finitely many ergodic components, and has conjectured that a similar result holds for $C^\infty$ surface diffeomorphisms with positive topological entropy.

\subsection{Acknowledgements} The author wishes to thank A. Katok and Y. Pesin for the suggestion to apply the results of \cite{S} to the study of the Bernoulli property of surface diffeomorphisms with respect to  measures of maximal entropy and equilibrium measures of $-t\log J_u$.

\end{document}